\documentclass[11pt,reqno]{amsart}
\usepackage{amsmath,amssymb, amscd, color, enumitem, cite}
\usepackage{mathrsfs, bm}
\usepackage{graphicx}
\usepackage{tikz}
\usepackage{pgfplots}
\usepackage{pgfplotstable}
\usepackage{tkz-fct}
\usetikzlibrary{pgfplots.polar}
\pgfplotsset{compat=newest}
\usepgfplotslibrary{fillbetween}
\usetikzlibrary{patterns}

\topskip=-0.5cm \textwidth=16cm \oddsidemargin=0cm
\evensidemargin=0cm \topmargin=1.5cm \textheight=20cm
\numberwithin{equation}{section}
\newtheorem{theorem}{Theorem}[section]
\newtheorem{lemma}[theorem]{Lemma}
\newtheorem{corollary}[theorem]{Corollary}

\theoremstyle{definition}
\newtheorem{definition}[theorem]{Definition}
\newtheorem{example}[theorem]{Example}
\theoremstyle{remark}
\newtheorem{remark}[theorem]{Remark}

\def\XXint#1#2#3{{\setbox0=\hbox{$#1{#2#3}{\int}$ }
\vcenter{\hbox{$#2#3$ }}\kern-.6\wd0}}


\definecolor{darkgreen}{rgb}{0.0,0.5,0.0}

\theoremstyle{definition}

\begin{document}

\title[Luxemburg Norm Localisation for Nonlocal Differential Equations]{Luxemburg Norm Localisation for Nonlocal Differential Equations in Variable Exponent Lebesgue Spaces}

\author[C. S. Goodrich]{Christopher S. Goodrich}
\address{School of Mathematics and Statistics\\
UNSW Sydney\\
Sydney, NSW 2052 Australia}
\email[Christopher S. Goodrich]{c.goodrich@unsw.edu.au}
\author[G. Nakhl]{Gabriel Nakhl}
\address{School of Mathematics and Statistics\\
UNSW Sydney\\
Sydney, NSW 2052 Australia}
\email[Gabriel Nakhl]{g.nakhl@student.unsw.edu.au}
\keywords{Nonlocal differential equation; Luxemburg norm; variable growth; positive solution; convolution.}
\subjclass[2020]{Primary: 34B10, 34B18, 42A85, 44A35, 46E30.  Secondary: 26A33, 47H30.}


\begin{abstract}
We investigate a class of variable growth nonlocal differential equations of Kirchhoff-type having the general form
\begin{equation}
-A\!\left(\int_0^1 b(1-s)\big(u(s)\big)^{p(s)}\,ds\right) u''(t) = \lambda f\big(t,u(t)\big), \quad t\in(0,1),\notag
\end{equation}
where $A$ is a possibly sign-changing function.  Our analysis is carried out in the variable-exponent Lebesgue space $L^{p(\cdot)}([0,1])$ under the standing hypothesis $p(t)>1$. We demonstrate that using the Luxemburg norm allows for a much sharper localisation of the solution to the nonlocal problem.  Moreover, conditions imposed on both $\lambda$ and $f$ are appreciably weakened by analysing the problem within the Luxemburg norm framework.  An example explicitly demonstrates both the qualitative and the quantitative advantages over earlier techniques.
\end{abstract}

\maketitle

\section{Introduction}

Variable-exponent Lebesgue spaces $L^{p(\cdot)}(\Omega)$ extend the classical theory by allowing the integrability exponent to vary measurably across the domain. They now play a routine role in a variety of areas of mathematics such as the analysis of PDEs \cite{cao_class_2021,edmunds_sobolev_2002,ge_small_2021-2,vetro_variable_2022}, mathematical modelling (e.g., elasticity theory, electrorheological fluids, and image enhancement)\cite{andreianov_numerical_2023,chen_mathematical_2006,rajagopal1,ruzicka_electrorheological_2000,zhikov1}, and regularity theory \cite{bronzi_regularity_2020,coscia_holder_2009,feyfoss2,ragusatachikawa1,tachikawa_singular_2014}.  The associated Luxemburg norm
\begin{equation*}
\|u\|_{L^{p(\cdot)}}=\inf\left\{\delta>0:\int_{\Omega}\left|\frac{u(x)}{\delta}\right|^{p(x)}\,dx\le 1\right\}
\end{equation*}
is the natural quantitative device for working in these spaces \cite{diening_lebesgue_2011,khamsi_fixed_2015} since it explicitly incorporates the variable exponent $p$ in its measurement.

Consider now the finite-convolution functional defined by
\begin{equation}
(u*v)(t):=\int_0^tu(t-s)v(s)\ ds\text{, }0\le t\le 1,\notag
\end{equation}
for $u$, $v\in L^1\big((0,1)\big)$.  For $\lambda>0$ a real parameter, in what follows we study positive solutions of the nonlocal differential equation
\begin{equation}\label{eq1.1}
-A\left(\left(b*u^{p(\cdot)}\right)(1)\right)\,u''(t)=\lambda\,f\bigl(t,u(t)\bigr),\qquad t\in(0,1),
\end{equation}
posed in $L^{p(\cdot)}([0,1])$, where \eqref{eq1.1} will be equipped with some boundary data such as, for example, the Dirichlet data $u(0)=0=u(1)$. Throughout we assume $A\ :\ [0,+\infty)\to\mathbb{R}$, $f\ :\ [0,1]\times[0,+\infty)\to[0,+\infty)$, and $p\ :\ [0,1]\to(1,+\infty)$ are continuous, and $b\in L^{1}((0,1])$ is a.e.\ nonnegative -- see Section 2 for the precise assumptions.  Note that the nonlocal element in \eqref{eq1.1} is
\begin{equation}
\left(b*u^{p(\cdot)}\right)(1)=\int_0^1b(1-s)\big(u(s)\big)^{p(s)}\ ds,\notag
\end{equation}
which, therefore, incorporates a \emph{variable} growth element.  As a reference point, when $b(t)\equiv 1$ and $p(t)\equiv p_0>1$ equation \eqref{eq1.1} reduces to
\begin{equation}
-A\bigl(\|u\|_{L^{p_0}}^{p_0}\bigr)u''(t)=\lambda f\bigl(t,u(t)\bigr),\notag
\end{equation}
which is evidently related to the one-dimensional steady-state analogue of the classical Kirchhoff model
\begin{equation}
u_{tt}-A\bigl(\|Du\|_{L^2}^{2}\bigr)\Delta u=\lambda f\big(\bm{x},u(\bm{x})\big).\notag
\end{equation}
Note, further, that the convolutional structure allows one to admit a variety of nonlocalities such as a Riemann-Liouville fractional integral by selecting $\displaystyle b(t)=\frac{1}{\Gamma(\alpha)}t^{\alpha-1}$, $0<\alpha<1$ -- see Goodrich and Lizama \cite{goodrichlizama2}, Lan \cite{lan2,lan_compactness_2020}, Podlubny \cite{podlubny_fractional_1999}, and Webb \cite{webb0}.

When it comes to ordinary differential equations with variable growth, the results seem to be rather scarce.  Very recently Garc\'{i}a-Huidobro, et al. \cite{garcia1,garcia2} considered an analysis of systems of \emph{\underline{local}} ordinary differential equations within the framework of variable exponents.  On the other hand, also recently both the first author \cite{goodrich_px-growth_2025,goodrich27} and the second author \cite{goodrich2025unified} have made some attempts to analyse \emph{\underline{nonlocal}} ordinary differential equations with $p(x)$-type growth in the nonlocality -- i.e., as in \eqref{eq1.1}.  However, none of these papers considers the possibility of leveraging the Luxemburg norm to obtain more natural and better control within the context of nonlocal, variable growth problems.

To expand on the point emphasised in the previous paragraph, consider the supremum-norm cone
\begin{equation}
\mathscr{K}_{\infty}:=\left\{ u \in \mathscr{C}([0,1]) : u \ge 0, \ \min_{t \in [\alpha,\beta]} u(t) \ge \eta_0 \|u\|_\infty, \ (\bm{1}*u)(1) \ge C_0 \|u\|_\infty \right\},\notag
\end{equation}
where $\bm{1}$ denotes the constant map $\bm{1}\ : \ \mathbb{R}\rightarrow\{1\}$, both $0<\eta_0\le1$ and $0<C_0<1$ are constants, and $0\le\alpha<\beta\le 1$ are constants.  Each of the recent papers \cite{goodrich_px-growth_2025,goodrich27,goodrich2025unified} utilised this $\mathscr{K}_{\infty}$ cone; again, we note that \cite{garcia1,garcia2} did not consider nonlocal problems at all.  Some weaknesses of the cone $\mathscr{K}_{\infty}$ are the following.
\begin{itemize}
\item $L^\infty$ estimates do not provide precise control in variable exponent spaces.

\item The cone is not intrinsically adapted to $L^{p(\cdot)}$ growth, which may be restrictive for variable exponent problems.
\end{itemize}
A natural question is then: could we modify $\mathscr{K}_{\infty}$ by means of the Luxemburg norm to analyse more naturally solutions of \eqref{eq1.1} within a topological fixed point milieu?

With this question in mind, then, the transition we make in this paper is to base the analysis on Luxemburg-modular bounds. Because the Luxemburg norm is intrinsic to $L^{p(\cdot)}$, it produces sharper and more natural estimates than those obtainable through $L^\infty$, while still allowing a fixed-point scheme via a carefully constructed cone.  In particular, in this work we introduce the following hybrid-type cone.
\[
\mathscr{K}_{\text{hybrid}} := \left\{ u \in \mathscr{C}([0,1])\ : \
\begin{aligned}
& u \ge 0, \\
& \min_{t \in [\alpha,\beta]} u(t) \ge \eta_0 \|u\|_\infty, \\
& (\bm{1}*u)(1) \ge C_0 \|u\|_{L^{p(\cdot)}}
\end{aligned}
\right\}
\]
Note that the principal dissimilarity between $\mathscr{K}_{\infty}$ and $\mathscr{K}_{\text{hybrid}}$ is that the latter replaces an $L^{\infty}$-type coercivity relation, namely $(\bm{1}*u)(1)\ge C_0\Vert u\Vert_{\infty}$, with a Luxemburg-norm coercivity relation, namely $(\bm{1}*u)(1)\ge C_0\Vert u\Vert_{L^{p(\cdot)}}$.

Whilst this dissimilarity might seem to border on trivial, as we demonstrate explicitly in this paper (cf., Remark \ref{remark2.22aaa} and Example \ref{example2.12}, for example) its salutary effects are surprisingly substantial.  Indeed, we obtain bounds that are both quantitatively sharper and structurally more natural for variable exponents, avoiding artefacts introduced by forcing the analysis through $L^\infty$.  For example, in Example \ref{example2.12} we demonstrate that for a specific problem the use of $\mathscr{K}_{\infty}$ leads to existence of solution under the conditions
\begin{equation}\label{eq1.2ggg}
\lambda\left(\min_{\left[\frac{1}{4},\frac{3}{4}\right] \times \left[\frac{1}{200},\frac{102}{25}\sqrt{2}\right]}f(t,u)\right)\gtrapprox0.002\ \text{ and }\ \lambda\left(\max_{\left[0, 1\right]\times\left[0, 4\sqrt2(\sqrt3+1)\right]}f(t,u)\right)\lessapprox9965.848,
\end{equation}
whereas the use of $\mathscr{K}_{\text{hybrid}}$ leads to existence of solution under the (weaker) conditions
\begin{equation}\label{eq1.3ggg}
\lambda\left(\min_{\left[\frac{1}{4},\frac{3}{4}\right] \times \left[\frac{1}{200},4\sqrt{2}\,(1/2500)^{1/5}\right]}f(t,u)\right)\gtrapprox1.869\times10^{-4}\ \text{ and } \ \lambda\left(\max_{[0,1]\times\left[0,\, 4\sqrt{6}\right]}f(t,u)\right)\lessapprox1.21\times 10^4.
\end{equation}
Furthermore, the localisation of the solution, say $u_0$, via $\mathscr{K}_{\infty}$ versus $\mathscr{K}_{\text{hybrid}}$ is
\begin{equation}\label{eq1.4ggg}
0.02<\Vert u_0\Vert_{\infty}\lessapprox15.455\ \text{ versus } \ 0.02<\Vert u_0\Vert_{L^{p(\cdot)}}\lessapprox1.732,
\end{equation}
respectively.  If in inequalities \eqref{eq1.2ggg}--\eqref{eq1.3ggg} we use the identification (in bold)
\begin{equation}
\begin{split}
\lambda\left(\min_{\left[\frac{1}{4},\frac{3}{4}\right]\times\left[\frac{1}{200},\textbf{ Inner Height}\right]}f(t,u)\right)&\gtrapprox\textbf{Inner Radius}\\
\lambda\left(\max_{[0,1]\times\left[0,\textbf{ Outer Height}\right]}f(t,u)\right)&\lessapprox\textbf{Outer Radius},\notag
\end{split}
\end{equation}
then Figures \ref{fig1}--\ref{fig2} visually demonstrate the degree of improvement of our theory.  Observe, that in some cases our improvements are of an order of magnitude or greater -- or, alternatively, on a percentage-wise basis as much as $92\%$.  Consequently, the improvements afforded by our new methodology are not merely \textquotedblleft$\varepsilon$-improvements\textquotedblright, as it were.  They are significant.
\begin{figure}
\begin{minipage}[c]{0.495\linewidth}
\includegraphics[width=\linewidth]{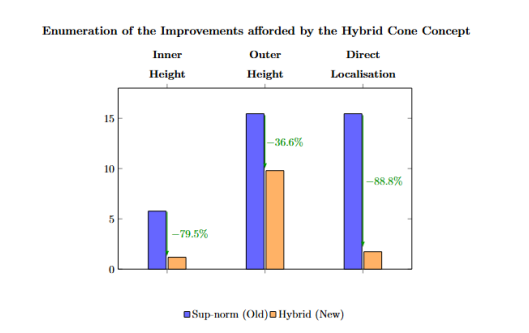}
\caption{A comparison of the outer/inner height in \eqref{eq1.2ggg}--\eqref{eq1.3ggg} and direct localisation in \eqref{eq1.4ggg}.}\label{fig1}
\end{minipage}
\hfill
\begin{minipage}[c]{0.495\linewidth}
\includegraphics[width=\linewidth]{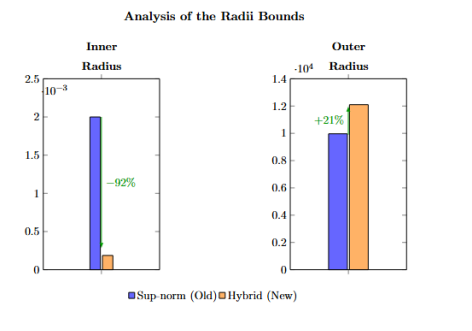}
\caption{A comparison of the outer/inner radius in \eqref{eq1.2ggg}--\eqref{eq1.3ggg}.}\label{fig2}
\end{minipage}
\end{figure}

As part of providing a more natural framework in which to study nonlocal variable exponent problems, we establish new embedding theorems linking $L^{p(\cdot)}$ to related function spaces, which supply the compactness needed for the fixed-point scheme within the hybrid cone $\mathscr{K}_{\text{hybrid}}$. Besides their role here, these embeddings enrich the functional analysis of variable exponent spaces and may be of independent interest to practitioners in the area.

Having characterised the novelty of the Luxemburg-focused aspect of our work here, let us conclude by mentioning another important contribution of our methodology.  In general, relatively severe restrictions are imposed on the nonlocal coefficient $A$ in nonlocal differential equations (whether PDEs or ODEs).  Far and away the most common \cite{alves_existence_2015,azzouz_existence_2012,bellamouchi1,biagi1,biagi2,boulaaras0,boulaaras1,calamai0,calamai1,correa1,correa2,do4,graef2,infante0,infante2,infante99,li1,stanczy1,wang1} is that $A(t)>0$ for all $t\ge0$; on occasion this has been coupled with a monotonicity-type condition \cite{yanma1,yan1}.  Relatively recently, somewhat less restrictive assumptions have been imposed, such as assuming \cite{ambrosetti1} that $A$ can vanish at $+\infty$, that $A$ vanishes at a single specified point \cite{delgado_non-local_2020}, or that $A(t)$ is positive on some neighbourhood of zero \cite{junior_positive_2018-1}.  Although the restrictions imposed by \cite{ambrosetti1,delgado_non-local_2020,junior_positive_2018-1} are definitely much less restrictive, they still place on $A$ very specific limitations.

Beginning with the first author's work \cite{goodrich_topological_2021-1}, which was subsequently generalised by Chu and Hao \cite{chu_positive_2025}, Goodrich, et al. \cite{goodrich_topological_2021-2,goodrich_nonlocal_2021,goodrich20,goodrich_application_2024-1,goodrich_existence_2022}, Hao and Wang \cite{hao1}, Shibata \cite{shibata3}, and Song and Hao \cite{song1}, a new methodology has been introduced that allows for the relaxation of the conditions enumerated in the preceding paragraph.  In particular, this new methodology relies on using specialised order cones and attendant open sets in order to obtain precise information on where the argument of the nonlocal element lives.  For example, in the case of problem \eqref{eq1.1}, we use the open set, for $\rho>0$,
\begin{equation}\label{eq1.5ggg}
\widehat{V}_{\rho}:=\left\{u\in\mathscr{K}_{\text{hybrid}}\ : \ \left(b*u^{p(\cdot)}\right)(1)<\rho\right\}.
\end{equation}
The key fact regarding \eqref{eq1.5ggg} is that whenever $u\in\partial\widehat{V}_{\rho}$ it follows that $\displaystyle\left(b*u^{p(\cdot)}\right)(1)=\rho$.  Since our arguments (see Theorem \ref{theorem2.25}) use topological fixed point theory, we work within sets of the form $\partial\widehat{V}_{\rho}$.  Consequently, we are able to localise precisely where $A$ must be positive, and so, this allows us to eliminate the usual restrictions identified earlier.  All in all, our results demonstrate that we can maintain the less restrictive hypotheses introduced in \cite{goodrich_topological_2021-1}, together with the new cone $\mathscr{K}_{\text{hybrid}}$, all whilst broadening the applicability of the existence theory.

\section{Preliminaries, Operator Definitions, and Existence Theory}\label{sec:preliminaries}

\subsection{General Assumptions}

We begin by setting the notation and spaces used throughout this paper. Let $\mathscr{C}([0,1])$ denote the space of continuous functions on $[0,1]$, equipped with the supremum norm $\Vert\cdot\Vert_\infty$, making it a Banach space.  As in, for example, \cite{goodrich2025unified}, by $\bm{1}$ we denote the constant function $\bm{1}:\mathbb{R}\to\{1\}$, and likewise by $\bm{0}$ we denote the constant function $\bm{0}:\mathbb{R}\to\{0\}$. For a measurable set $E\subset[0,1]$, we write $\bm{1}_E$ for its indicator: $\bm{1}_E(t)=1$ if $t\in E$ and $0$ otherwise. By $*$ we denote the finite convolution functional on $[0,1]$ so that
\begin{equation*}
(u*v)(t):=\int_0^t u(t-s)v(s)\,ds,\quad 0\le t\le 1,
\end{equation*}
for $u, v\in L^1\big((0,1)\big)$. Finally, for a given continuous function $h:[0,1]\times[0,+\infty)\to[0,+\infty)$ and real numbers $0\le a<b\le1$ and $0\le c<d<+\infty$ we denote by $h_{[a,b]\times[c,d]}^{m}$ and $h_{[a,b]\times[c,d]}^{M}$ the following quantities.
\begin{equation*}
\begin{split}
h_{[a,b]\times[c,d]}^{m}&:=\min_{(t,u)\in[a,b]\times[c,d]}h(t,u)\\
h_{[a,b]\times[c,d]}^{M}&:=\max_{(t,u)\in[a,b]\times[c,d]}h(t,u)
\end{split}
\end{equation*}

Since we will study solutions of (1.1) subject to given boundary data, we will require the notion of a Green's function $G\ : \ [0,1]\times[0,1]\rightarrow[0,+\infty)$.  The following are two, commonly occurring examples of Green's functions, together with the boundary data they encode -- cf., \cite{erbe_positive_2004}.
\begin{align}
G(t,s) &:=
\begin{cases}
t(1-s), & 0 \le t \le s \le 1, \\
s(1-t), & 0 \le s \le t \le 1,
\end{cases}
&& \text{(Dirichlet)} \label{eq:G_dirichlet} \\[1ex]
G(t,s) &:=
\begin{cases}
t, & 0 \le t \le s \le 1, \\
s, & 0 \le s \le t \le 1,
\end{cases}
&& \text{(Right-focal)} \label{eq:G_rightfocal}
\end{align}

We now state the general assumptions underlying our study of \eqref{eq1.1}.  Both \eqref{eq:G_dirichlet} and \eqref{eq:G_rightfocal} satisfy (H3), along with many other standard choices (see \cite{goodrich_existence_2010,infante_existence_2014,goodrich_new_2018}).

\begin{list}{}{\setlength{\leftmargin}{.5in}\setlength{\rightmargin}{0in}}
\item[\textbf{H1:}] The functions $A:[0,+\infty)\to\mathbb{R}$, $f:[0,1]\times[0,+\infty)\to[0,+\infty)$, and $b:(0,1]\to[0,+\infty)$ satisfy:
\begin{enumerate}
    \item $A$ and $f$ are continuous on their domains.
    \item $b\in L^1((0,1]) \text{ with } b(t)>0 \text{ a.e. }t\in(0,1]$
    \item There exist $0<\rho_1<\rho_2$ such that $A(t) > 0$ for all $t \in [\rho_1, \rho_2]$.
\end{enumerate}

\item[\textbf{H2:}] The exponent $p:[0,1]\to(1,+\infty)$ is continuous and satisfies
\[
1 <  \min_{t\in[0,1]} p(t)=:p^- \le p(t) \le p^+ := \max_{t\in[0,1]} p(t) < +\infty.
\]

\item[\textbf{H3:}] The Green's function $G:[0,1]\times[0,1]\to[0,+\infty)$ satisfies each of the following.
\begin{enumerate}
    \item Let $\displaystyle\mathscr{G}(s) := \max_{t\in[0,1]} G(t,s)$. Assume $\mathscr{G}(s)>0$ for all $s\in(0,1)$. There exist $0\le \alpha < \beta \le 1$ and $\eta_0 = \eta_0(\alpha,\beta)\in(0,1]$ such that
    \begin{equation*}
    \min_{t\in[\alpha,\beta]} G(t,s) \ge \eta_0 \mathscr{G}(s), \quad \forall s\in[0,1].
    \end{equation*}
    \item Define \begin{equation*}\displaystyle
    C_0 := \inf_{s\in(0,1)} \frac{1}{\mathscr{G}(s)} \int_0^1 G(t,s)\, dt.
    \end{equation*}
    Then $1>C_0>0$.
\end{enumerate}
\end{list}

Recall, furthermore, the variable exponent Lebesgue space $L^{p(\cdot)}([0,1])$, a Banach function space equipped with the Luxemburg norm (see, e.g., \cite{diening_lebesgue_2011}):
\[
\Vert u \Vert_{L^{p(\cdot)}} := \inf \left\{ \delta > 0 : \int_0^1 \left| \frac{u(x)}{\delta} \right|^{p(x)} \, dx \le 1 \right\}.
\]
Henceforth, for $u \in L^{p(\cdot)}([0,1])$ we refer to the functional
\[
I_{p(\cdot)}(u) := \int_0^1 \big|u(t)\big|^{p(t)}\,dt
\]
as the \emph{modular}, terminology which is standard in this area of study.  The following standard estimates link the Luxemburg norm and its associated modular.

\begin{lemma}\label{lem:Luxemburg_basic}
For $u \in L^{p(\cdot)}([0,1])$ it holds that $\displaystyle I_{p(\cdot)}\!\left(\frac{u}{\|u\|_{L^{p(\cdot)}}}\right)=1$ whenever $u\not\equiv 0$.
\end{lemma}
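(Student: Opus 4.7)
Fix $u\in L^{p(\cdot)}([0,1])$ with $u\not\equiv 0$, and write $\delta_0:=\|u\|_{L^{p(\cdot)}}$, which is strictly positive since $u\not\equiv 0$. The plan is to establish the two inequalities $I_{p(\cdot)}(u/\delta_0)\le 1$ and $I_{p(\cdot)}(u/\delta_0)\ge 1$ separately, relying on the definition of the Luxemburg norm as an infimum together with the boundedness of the exponent from H2.

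For the upper bound, I would invoke the definition of infimum to select a sequence $\delta_n\searrow \delta_0$ with $\delta_n>\delta_0$ and $I_{p(\cdot)}(u/\delta_n)\le 1$ for every $n$. Since $1/\delta_n\nearrow 1/\delta_0$, the integrands $|u(t)/\delta_n|^{p(t)}$ form a monotone increasing sequence of nonnegative measurable functions converging pointwise to $|u(t)/\delta_0|^{p(t)}$. The monotone convergence theorem then yields
\[
I_{p(\cdot)}\!\left(\frac{u}{\delta_0}\right)=\lim_{n\to\infty} I_{p(\cdot)}\!\left(\frac{u}{\delta_n}\right)\le 1.
\]

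For the lower bound I would argue by contradiction. Suppose $I_{p(\cdot)}(u/\delta_0)=c<1$. For any $0<\delta<\delta_0$, the pointwise identity $|u/\delta|^{p(t)}=(\delta_0/\delta)^{p(t)}|u/\delta_0|^{p(t)}$ combined with $p(t)\le p^+<+\infty$ (this is where H2 is essential) gives
\[
I_{p(\cdot)}\!\left(\frac{u}{\delta}\right)\le \left(\frac{\delta_0}{\delta}\right)^{p^+} I_{p(\cdot)}\!\left(\frac{u}{\delta_0}\right)=\left(\frac{\delta_0}{\delta}\right)^{p^+} c.
\]
Since $c<1$, one can choose $\delta<\delta_0$ sufficiently close to $\delta_0$ so that the right-hand side is still $\le 1$, which contradicts $\delta_0$ being the infimum of admissible thresholds. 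Hence $I_{p(\cdot)}(u/\delta_0)\ge 1$, and combining the two inequalities closes the proof.

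The main obstacle, if there is one, is the lower bound: the upper bound is essentially a monotone convergence exercise, whereas the lower bound genuinely requires the boundedness of $p$ encoded in H2 to ensure the modular depends continuously on the scaling parameter. Without $p^+<+\infty$, the factor $(\delta_0/\delta)^{p(t)}$ cannot be uniformly controlled and the rescaling argument breaks down; this explains why the standing hypothesis $p^+<+\infty$ is not merely cosmetic.
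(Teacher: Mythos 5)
Your proof is correct; the paper supplies no argument at all for this lemma (it is stated as a standard fact, implicitly deferred to the cited literature, e.g., Diening et al.), so there is no in-paper proof to compare against. Your two-sided argument is the standard one: the monotone convergence theorem delivers $I_{p(\cdot)}(u/\delta_0)\le 1$ along a sequence $\delta_n\searrow\delta_0$ from the admissible set, and the rescaling estimate $I_{p(\cdot)}(u/\delta)\le(\delta_0/\delta)^{p^+}I_{p(\cdot)}(u/\delta_0)$ for $\delta<\delta_0$, which is precisely where $p^+<\infty$ from (H2) enters, yields a contradiction with the infimum property if the modular were strictly below $1$. You have also correctly flagged that without the bound $p^+<\infty$ the identity fails in general and degenerates to the one-sided inequality $I_{p(\cdot)}(u/\|u\|_{L^{p(\cdot)}})\le 1$, so the reliance on (H2) is genuine and not cosmetic. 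The only point worth making explicit, though it is implicit, is that the admissible set $\{\delta>0: I_{p(\cdot)}(u/\delta)\le 1\}$ is nonempty because $u\in L^{p(\cdot)}([0,1])$ guarantees $\|u\|_{L^{p(\cdot)}}<\infty$; otherwise the sequence $\delta_n$ could not be selected.
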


\begin{corollary}\label{cor:mod_norm_ge_1}
Assume condition \textnormal{(H2)}. For any $u\in L^{p(\cdot)}([0,1])$ with $\|u\|_{L^{p(\cdot)}}\ge 1$,
\[
\|u\|_{L^{p(\cdot)}}^{\,p^-} \le \int_0^1 |u(t)|^{p(t)}\,dt 
\le \|u\|_{L^{p(\cdot)}}^{\,p^+}.
\]
\end{corollary}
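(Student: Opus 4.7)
The plan is to deduce the two inequalities directly from Lemma \ref{lem:Luxemburg_basic} by carefully exploiting the monotonicity of $\delta \mapsto \delta^{q}$ for $\delta \ge 1$ as the exponent $q$ varies over the interval $[p^-, p^+]$. First I would note that the hypothesis $\|u\|_{L^{p(\cdot)}} \ge 1$ forces $u \not\equiv 0$, so Lemma \ref{lem:Luxemburg_basic} applies and yields the identity
\begin{equation*}
\int_0^1 \frac{|u(t)|^{p(t)}}{\|u\|_{L^{p(\cdot)}}^{\,p(t)}}\,dt = 1.
\end{equation*}

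Next, set $\delta := \|u\|_{L^{p(\cdot)}} \ge 1$. Because $\delta \ge 1$ and $p^- \le p(t) \le p^+$ pointwise by hypothesis (H2), we have the pointwise sandwich $\delta^{p^-} \le \delta^{p(t)} \le \delta^{p^+}$, hence
\begin{equation*}
\frac{|u(t)|^{p(t)}}{\delta^{p^+}} \;\le\; \frac{|u(t)|^{p(t)}}{\delta^{p(t)}} \;\le\; \frac{|u(t)|^{p(t)}}{\delta^{p^-}}
\end{equation*}
for a.e.\ $t \in [0,1]$. Integrating this chain over $[0,1]$ and using the modular identity above gives
\begin{equation*}
\frac{1}{\delta^{p^+}}\int_0^1 |u(t)|^{p(t)}\,dt \;\le\; 1 \;\le\; \frac{1}{\delta^{p^-}}\int_0^1 |u(t)|^{p(t)}\,dt.
\end{equation*}

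Multiplying the left inequality by $\delta^{p^+}$ and the right inequality by $\delta^{p^-}$ rearranges to precisely the claimed two-sided estimate
\begin{equation*}
\delta^{p^-} \;\le\; \int_0^1 |u(t)|^{p(t)}\,dt \;\le\; \delta^{p^+},
\end{equation*}
which finishes the proof. There is no real obstacle here; the argument is a direct consequence of Lemma \ref{lem:Luxemburg_basic} combined with the elementary observation that for $\delta \ge 1$ the map $q \mapsto \delta^{q}$ is nondecreasing, so that the only step requiring any care is tracking the direction of the inequalities when passing from $\delta^{p(t)}$ in the denominator to the uniform bounds $\delta^{p^\pm}$.
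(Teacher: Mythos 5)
Your proof is correct and is precisely the standard modular-norm argument that the paper disposes of with a citation to Diening et al.\ rather than writing out; you have simply supplied the details the paper omits. The key steps — invoking Lemma~\ref{lem:Luxemburg_basic} to get the unit modular identity, setting $\delta=\|u\|_{L^{p(\cdot)}}\ge 1$, and using the pointwise monotonicity $\delta^{p^-}\le\delta^{p(t)}\le\delta^{p^+}$ before integrating — are exactly right, and the hypothesis $\|u\|_{L^{p(\cdot)}}\ge 1$ does indeed rule out $u\equiv 0$ so that Lemma~\ref{lem:Luxemburg_basic} applies.
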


\begin{proof}
Standard; see, e.g., \cite{diening_lebesgue_2011}.
\end{proof}

\begin{corollary}\label{cor:mod_norm_le_1}
Assume condition \textnormal{(H2)}.  For any $u\in L^{p(\cdot)}([0,1])$ with $0<\|u\|_{L^{p(\cdot)}}\le 1$,
\[
\|u\|_{L^{p(\cdot)}}^{\,p^+} \le \int_0^1 |u(t)|^{p(t)}\,dt 
\le \|u\|_{L^{p(\cdot)}}^{\,p^-}.
\]
\end{corollary}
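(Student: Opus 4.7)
The plan is to normalise by the Luxemburg norm and then exploit the elementary monotonicity of $\delta \mapsto \delta^{p(t)}$ for $\delta\in(0,1]$. Set $\delta:=\|u\|_{L^{p(\cdot)}}$, so by hypothesis $0<\delta\le 1$. Lemma~\ref{lem:Luxemburg_basic} then gives the normalisation identity
\[
\int_0^1 \left(\frac{|u(t)|}{\delta}\right)^{p(t)} dt \;=\; I_{p(\cdot)}\!\left(\frac{u}{\delta}\right) \;=\; 1.
\]

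Next I would rewrite $|u(t)|^{p(t)}$ as $\delta^{p(t)}\cdot\bigl(|u(t)|/\delta\bigr)^{p(t)}$ and bound the factor $\delta^{p(t)}$ pointwise. Since $0<\delta\le 1$, the function $q\mapsto \delta^{q}$ is non-increasing in $q$, so assumption (H2) yields
\[
\delta^{p^+} \;\le\; \delta^{p(t)} \;\le\; \delta^{p^-}, \qquad t\in[0,1].
\]
Multiplying by the nonnegative quantity $(|u(t)|/\delta)^{p(t)}$ and integrating over $[0,1]$ preserves these inequalities, giving
\[
\delta^{p^+}\!\int_0^1 \!\left(\frac{|u(t)|}{\delta}\right)^{p(t)}\!dt \;\le\; \int_0^1 |u(t)|^{p(t)}\,dt \;\le\; \delta^{p^-}\!\int_0^1\!\left(\frac{|u(t)|}{\delta}\right)^{p(t)}\!dt.
\]
Applying the normalisation identity reduces both outer terms to $\delta^{p^+}$ and $\delta^{p^-}$ respectively, which is exactly the claimed bound.

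There is really no obstacle here, as the argument is entirely parallel to Corollary~\ref{cor:mod_norm_ge_1}; the only point where care is needed is remembering that the monotonicity direction of $\delta\mapsto \delta^{q}$ reverses when $\delta\le 1$ versus $\delta\ge 1$, which is precisely why $p^+$ and $p^-$ swap roles relative to the previous corollary. Alternatively, one could cite the standard reference \cite{diening_lebesgue_2011} as in the previous corollary.
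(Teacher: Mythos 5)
Your proof is correct and is exactly the standard argument that the paper gestures at when it says the result ``follows directly from the definition of the Luxemburg norm and the bounds $p^-\le p(t)\le p^+$''; the decomposition $|u(t)|^{p(t)}=\delta^{p(t)}\bigl(|u(t)|/\delta\bigr)^{p(t)}$ combined with the normalisation identity from Lemma~\ref{lem:Luxemburg_basic} and the monotonicity reversal of $q\mapsto\delta^q$ for $0<\delta\le 1$ is precisely what the cited reference carries out. The only point worth tightening is the cheap observation that $u\not\equiv 0$ (so that Lemma~\ref{lem:Luxemburg_basic} is applicable), but that is already guaranteed by the hypothesis $\|u\|_{L^{p(\cdot)}}>0$.
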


\begin{proof}
This follows directly from the definition of the Luxemburg norm and the bounds $p^-\le p(t)\le p^+$ a.e.; see also \cite{diening_lebesgue_2011}.
\end{proof}

\subsection{The Hybrid Functional Analytic Framework}

As mentioned in Section 1, the cone we will use throughout this paper is $\mathscr{K}_{\text{hybrid}}$, which, we recall, is defined as follows.
\[
\mathscr{K}_{\text{hybrid}} := \left\{ u \in \mathscr{C}([0,1])\ : \ 
\begin{aligned}
& u \ge 0, \\
& \min_{t \in [\alpha,\beta]} u(t) \ge \eta_0 \|u\|_\infty, \\
& (\bm{1}*u)(1) \ge C_0 \|u\|_{L^{p(\cdot)}}
\end{aligned}
\right\}
\]
Attendant to this cone, and also as mentioned in Section 1, for a real number $\rho>0$ we will use the (relatively) open set
\begin{equation}
\widehat{V}_{\rho}:=\left\{u\in\mathscr{K}_{\text{hybrid}}\ : \ \left(b*u^{p(\cdot)}\right)(1)<\rho\right\}.\notag
\end{equation}
By the continuity of $\displaystyle\Phi(u):=\left(b*u^{p(\cdot)}\right)(1)$ in the $\|\cdot\|_\infty$–topology, 
the set $\widehat V_\rho=\{u\in\mathscr{K}_{\mathrm{hybrid}}:\Phi(u)<\rho\}$ is relatively open, and
\[
\partial\widehat V_\rho=\{u\in\mathscr{K}_{\mathrm{hybrid}}:\Phi(u)=\rho\};
\]
see, for example, \cite[pp. 5--6]{chu_positive_2025}. We will use the characterisation of $\partial\widehat{V}_{\rho}$ frequently in what follows.

Our first lemma in this subsection is a lower bound on $\Vert u\Vert_{\infty}$ provided that $u\in\partial\widehat{V}_{\rho}$ for some $\rho>0$.  This result may be found in \cite[Lemma 2.3]{goodrich2025unified}.

\begin{lemma}\label{lem:sup-lower-bound}
Suppose that $u\in\partial\widehat{V}_{\rho}$ for some $\rho>0$.
Then
$$\Vert u\Vert_{\infty}\ge\left(\frac{\rho}{(b*\bm{1})(1)}\right)^{\frac{1}{p^+}}+\varepsilon_1(\rho,b),$$
\noindent
where
$$\varepsilon_1(\rho,b):=\begin{cases}
\left(\frac{\rho}{(b*\bm{1})(1)}\right)^{\frac{1}{p^-}}-\left(\frac{\rho}{(b*\bm{1})(1)}\right)^{\frac{1}{p^+}}\text{, }&0<\left(\frac{\rho}{(b*\bm{1})(1)}\right)<1\\
0\text{, }&\left(\frac{\rho}{(b*\bm{1})(1)}\right)\ge1
\end{cases}.$$
\end{lemma}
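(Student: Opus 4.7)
The plan is to convert the boundary condition $u\in\partial\widehat V_\rho$ into the modular identity $(b*u^{p(\cdot)})(1)=\rho$, dominate the pointwise factor $u(s)^{p(s)}$ by $\|u\|_\infty^{p(s)}$, and then split into the two cases $\|u\|_\infty\ge 1$ and $\|u\|_\infty<1$ so as to replace the variable exponent $p(s)$ by the constants $p^+$ and $p^-$ respectively. I will close by matching the two resulting inequalities with the piecewise definition of $\varepsilon_1(\rho,b)$.

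First, by the characterisation of $\partial\widehat V_\rho$ recalled just above the lemma, the hypothesis $u\in\partial\widehat V_\rho$ is equivalent to
\[
\int_0^1 b(1-s)\,u(s)^{p(s)}\,ds \;=\; \rho.
\]
Since $0\le u(s)\le\|u\|_\infty$ on $[0,1]$ and $x\mapsto x^{p(s)}$ is nondecreasing on $[0,+\infty)$, I obtain $u(s)^{p(s)}\le \|u\|_\infty^{p(s)}$ almost everywhere, so
\[
\rho \;\le\; \int_0^1 b(1-s)\,\|u\|_\infty^{p(s)}\,ds.
\]

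I then perform the case split on $\|u\|_\infty$. If $\|u\|_\infty\ge 1$, then $\|u\|_\infty^{p(s)}\le\|u\|_\infty^{p^+}$ for a.e.\ $s$, so $\rho\le\|u\|_\infty^{p^+}(b*\bm{1})(1)$, giving $\|u\|_\infty\ge\bigl(\rho/(b*\bm{1})(1)\bigr)^{1/p^+}$. If instead $\|u\|_\infty<1$, then $\|u\|_\infty^{p(s)}\le\|u\|_\infty^{p^-}$, whence $\|u\|_\infty\ge\bigl(\rho/(b*\bm{1})(1)\bigr)^{1/p^-}$; in particular, this sub-case forces $\rho/(b*\bm{1})(1)<1$.

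To reconcile with the stated inequality, I inspect each branch of $\varepsilon_1$. When $\rho/(b*\bm{1})(1)\ge 1$, the second case above cannot occur (else $\|u\|_\infty<1$ would force $\rho/(b*\bm{1})(1)<1$), so the first case yields the claim with $\varepsilon_1=0$. When $0<\rho/(b*\bm{1})(1)<1$, either sub-case can occur: the sub-case $\|u\|_\infty<1$ directly produces the required $\bigl(\rho/(b*\bm{1})(1)\bigr)^{1/p^-}$ bound, while the sub-case $\|u\|_\infty\ge 1$ gives $\|u\|_\infty\ge 1>\bigl(\rho/(b*\bm{1})(1)\bigr)^{1/p^-}$, the latter inequality being strict since $\rho/(b*\bm{1})(1)<1$ and $p^->1$. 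There is no substantive analytic difficulty here; the only real bookkeeping point—hence the main obstacle, such as it is—is verifying that the dichotomy on $\|u\|_\infty$ aligns consistently with the piecewise definition of $\varepsilon_1$, which the analysis above handles directly.
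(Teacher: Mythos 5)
Your proof is correct. The paper itself omits the proof of this lemma (referring instead to \cite[Lemma~2.3]{goodrich2025unified}), so there is no in-paper argument to compare against, but your reasoning is the natural one and closes cleanly: you replace $u(s)^{p(s)}$ by $\|u\|_\infty^{p(s)}$ using $u\ge 0$ (which holds because $\partial\widehat V_\rho\subset\mathscr K_{\mathrm{hybrid}}$), split on whether $\|u\|_\infty\ge 1$ or $\|u\|_\infty<1$ to replace the variable exponent by $p^+$ or $p^-$, and then check that the resulting dichotomy matches the piecewise definition of $\varepsilon_1(\rho,b)$ in each regime of $\rho/(b*\bm{1})(1)$. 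The only background facts you use implicitly — that $(b*\bm{1})(1)=\int_0^1 b>0$ so the quotient is well defined, and that $\partial\widehat V_\rho$ carries the modular identity $(b*u^{p(\cdot)})(1)=\rho$ — are exactly the standing assumptions (H1.2) and the characterisation of $\partial\widehat V_\rho$ recorded just before the lemma, so the argument is complete.
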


\begin{proof}
Omitted.
\end{proof}

We next introduce a definition, which captures the principal novelty of the hybrid cone, namely stating the coercivity condition in terms of $\Vert\cdot\Vert_{L^{p(\cdot)}}$.

\begin{definition}\label{def:hybrid-thickness}
Let $\alpha$, $\beta$, $\eta_0$, and $C_0$ be the constants from \emph{(H3)}.
A function $u\in \mathscr{C}([0,1])\cap L^{p(\cdot)}([0,1])$ satisfies the \emph{hybrid thickness condition} if both
\begin{equation*}
\min_{t\in[\alpha,\beta]} u(t) \ge \eta_0\,\|u\|_\infty,
\end{equation*}
and
\begin{equation*}\label{eq:hyb-Y}
(\bm{1}*u)(1) \ge C_0\,\|u\|_{L^{p(\cdot)}}
\end{equation*}
hold.
\end{definition}


As we argue in our next collection of results, the hybrid cone $\mathscr{K}_{\mathrm{hybrid}}$ is, in fact, an enlargement of the original cone, $\mathscr{K}_{\infty}$, in~\cite{goodrich2025unified}.

\begin{lemma}\label{lem:Linf_to_Lp_embedding}
Assume condition \textnormal{(H2)}. Then $\mathscr{C}([0,1]) \subset L^{p(\cdot)}([0,1])$ and 
$\|u\|_{L^{p(\cdot)}} \le \|u\|_\infty$ for all $u\in \mathscr{C}([0,1])$.
\end{lemma}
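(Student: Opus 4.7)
The plan is to show both claims directly from the definition of the Luxemburg norm, since this is really just the standard comparison between the sup norm and the $L^{p(\cdot)}$ norm on a finite-measure domain. I will treat the two assertions in order: the inclusion first, then the norm inequality.

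For the inclusion $\mathscr{C}([0,1])\subset L^{p(\cdot)}([0,1])$, I take any $u\in\mathscr{C}([0,1])$. Since $[0,1]$ is compact, $u$ is bounded with $|u(t)|\le\|u\|_\infty$ for every $t\in[0,1]$. Using (H2), which guarantees the bound $1<p^-\le p(t)\le p^+<+\infty$, I estimate the modular
\[
I_{p(\cdot)}(u)=\int_0^1|u(t)|^{p(t)}\,dt\le\int_0^1\max\bigl\{\|u\|_\infty^{p^-},\|u\|_\infty^{p^+}\bigr\}\,dt<+\infty,
\]
so $u$ belongs to $L^{p(\cdot)}([0,1])$.

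For the norm inequality, I would handle the trivial case $u\equiv 0$ separately and assume $\|u\|_\infty>0$. The key observation is simply that with $\delta:=\|u\|_\infty$ one has $|u(t)/\delta|\le 1$ for every $t\in[0,1]$, hence $|u(t)/\delta|^{p(t)}\le 1$ pointwise since the exponent is positive. Integrating over $[0,1]$ (measure one) then yields
\[
\int_0^1\left|\frac{u(t)}{\delta}\right|^{p(t)}\,dt\le 1,
\]
so $\delta=\|u\|_\infty$ lies in the set of admissible $\delta$ appearing in the definition of the Luxemburg norm. Taking the infimum gives $\|u\|_{L^{p(\cdot)}}\le\|u\|_\infty$, as required.

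There is no real obstacle here — the result is essentially a consequence of $[0,1]$ having finite (in fact unit) Lebesgue measure, together with the monotonicity $x\mapsto x^{p(t)}$ on $[0,1]$. The only minor point worth emphasising in the write-up is the use of (H2) to ensure $p(t)>0$ everywhere so that the pointwise inequality $|u(t)/\delta|^{p(t)}\le 1$ genuinely holds, and the finiteness of $p^+$ so that the modular estimate in the inclusion step does not blow up.
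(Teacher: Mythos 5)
Your proof is correct and follows essentially the same argument as the paper: scaling $u$ by $\|u\|_\infty$ to get a pointwise bound of $1$, integrating over the unit-measure interval, and invoking the Luxemburg infimum. The only thing the paper adds that you omit is a demonstration that the inclusion is \emph{strict} (using an indicator function $\bm{1}_E$ with $0<|E|<1$, which lies in $L^{p(\cdot)}$ but not $\mathscr{C}([0,1])$); if the ``$\subset$'' in the statement is read as proper inclusion, you would want to append that one-line observation.
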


\begin{proof}
Let $u\in\mathscr{C}([0,1])$.  If $\|u\|_\infty=0$, then $u\equiv 0$ and the desired claim is trivially true. Otherwise, set $M:=\|u\|_\infty>0$. Since $|u(x)|/M\le 1$ for all $x\in[0,1]$, we have $\big(|u(x)|/M\big)^{p(x)}\le 1$. Integrating yields
\[
\int_0^1 \left(\frac{|u(x)|}{M}\right)^{p(x)}\,dx \le 1.
\]
By the definition of the Luxemburg norm it follows that $\|u\|_{L^{p(\cdot)}}\le M=\|u\|_\infty$, so $\mathscr{C}([0,1])\subseteq L^{p(\cdot)}([0,1])$. The inclusion is proper, for if $E\subset[0,1]$ satisfies $0<|E|<1$, and we set $u:=\bm{1}_E$, then
\[
\int_0^1 |u(t)|^{p(t)}\,dt=\int_E 1\,dt=|E|<+\infty.
\]
So, $u\in L^{p(\cdot)}([0,1])$, whereas $u\notin \mathscr{C}([0,1])$.  And this completes the proof.
\end{proof}

\begin{lemma}\label{lemma2.8bbb}
Assume conditions {\rm(H2)}--{\rm(H3)}.  We have the inclusion
\[
\mathscr{K}_{\infty} \subseteq\mathscr{K}_{\mathrm{hybrid}}.
\]
\end{lemma}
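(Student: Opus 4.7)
The plan is to reduce the inclusion to Lemma \ref{lem:Linf_to_Lp_embedding}, since the two cones differ only in how the coercivity relation is measured. Concretely, I would pick an arbitrary $u\in\mathscr{K}_{\infty}$ and verify each of the three defining conditions of $\mathscr{K}_{\mathrm{hybrid}}$ in turn. Nonnegativity $u\ge 0$ and the localised minimum bound $\min_{t\in[\alpha,\beta]}u(t)\ge\eta_0\|u\|_\infty$ are built into $\mathscr{K}_{\infty}$ verbatim, so these two conditions transfer with no work.

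The only condition that needs a genuine argument is the Luxemburg coercivity
\[
(\bm{1}*u)(1)\ge C_0\,\|u\|_{L^{p(\cdot)}}.
\]
Here I would chain the hypothesis $(\bm{1}*u)(1)\ge C_0\|u\|_\infty$ coming from membership in $\mathscr{K}_{\infty}$ together with the embedding inequality $\|u\|_{L^{p(\cdot)}}\le\|u\|_\infty$ supplied by Lemma \ref{lem:Linf_to_Lp_embedding} (which applies because $u\in\mathscr{C}([0,1])$ and condition (H2) holds). Since $C_0>0$, multiplying the embedding inequality by $C_0$ and combining yields
\[
(\bm{1}*u)(1)\ge C_0\|u\|_\infty\ge C_0\|u\|_{L^{p(\cdot)}},
\]
which is exactly the Luxemburg coercivity needed for membership in $\mathscr{K}_{\mathrm{hybrid}}$.

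There is no real obstacle here; the entire content of the statement is that the $L^\infty$-coercivity is stronger than the Luxemburg coercivity, a monotonicity which is immediate from the embedding $\mathscr{C}([0,1])\hookrightarrow L^{p(\cdot)}([0,1])$ with operator norm at most one. I would close by noting that the inclusion is in general strict: the $\mathscr{K}_{\mathrm{hybrid}}$ cone admits functions whose supremum norm is large relative to their Luxemburg norm (e.g.\ tall narrow bumps), and for such functions the $L^\infty$-coercivity required by $\mathscr{K}_{\infty}$ can fail while the weaker Luxemburg condition still holds. This strictness is what motivates the entire hybrid framework and explains the quantitative improvements advertised in Section 1.
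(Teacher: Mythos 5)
Your proof is correct and coincides with the paper's: both reduce the inclusion to the embedding inequality $\|u\|_{L^{p(\cdot)}}\le\|u\|_\infty$ from Lemma~\ref{lem:Linf_to_Lp_embedding}, noting that the other two cone conditions are shared verbatim. The closing remark about strictness is accurate background but is actually the content of the subsequent lemma in the paper and is not needed here.
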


\begin{proof}
Let $u \in \mathscr{K}_{\infty}$. Then
\[
(\bm{1}*u)(1) \ge C_0 \|u\|_\infty \ge C_0 \|u\|_{L^{p(\cdot)}},
\]
where the second inequality follows from Lemma~\ref{lem:Linf_to_Lp_embedding}. Thus, $u \in \mathscr{K}_{\mathrm{hybrid}}$.
\end{proof}

The next lemma demonstrates that the inclusion of Lemma \ref{lemma2.8bbb} may be strengthened to a strict inclusion.

\begin{lemma}
Assume conditions {\rm(H2)}--{\rm(H3)}.  Then
\[
\mathscr K_\infty \subsetneq \mathscr K_{\mathrm{hybrid}}.
\]
\end{lemma}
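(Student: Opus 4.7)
The plan is to establish strict containment by exhibiting an explicit $u^\ast \in \mathscr{K}_{\mathrm{hybrid}} \setminus \mathscr{K}_\infty$. The guiding observation is that Lemma~\ref{lem:Linf_to_Lp_embedding} sharpens to \emph{strict} inequality $\|u\|_{L^{p(\cdot)}} < \|u\|_\infty$ whenever $u \in \mathscr{C}([0,1])$ is nonnegative and non-constant. Indeed, in the proof of that lemma, $\int_0^1 (|u(x)|/\|u\|_\infty)^{p(x)}\,dx = 1$ would force $|u| \equiv \|u\|_\infty$ a.e., which for continuous $u$ means $u$ is constant; hence $I_{p(\cdot)}(u/\|u\|_\infty) < 1$ strictly for non-constant $u$, so the scaling parameter may be pushed below $\|u\|_\infty$. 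This positive gap between the two norms is precisely what creates room for the hybrid coercivity to hold while the original $L^\infty$ coercivity fails.

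To carry out the construction I would take as base a normalised Green's-function slice $u_0(t):=G(t,s_0)/\mathscr{G}(s_0)$, where $s_0\in(0,1)$ is chosen so that $\int_0^1 G(t,s_0)\,dt/\mathscr{G}(s_0)$ is as close to $C_0$ as desired (possible by the infimum definition in (H3)(2)). Then $\|u_0\|_\infty=1$, and (H3)(1) gives $\min_{t\in[\alpha,\beta]}u_0(t)\ge\eta_0=\eta_0\|u_0\|_\infty$, so $u_0\in\mathscr{K}_\infty$ with $(\bm{1}*u_0)(1)$ pinned near the $\mathscr{K}_\infty$-coercivity boundary. Since $G(\cdot,s_0)$ vanishes at the endpoints of $[0,1]$, the function $u_0$ is non-constant, and the sharpened form of Lemma~\ref{lem:Linf_to_Lp_embedding} yields a positive gap $\gamma:=1-\|u_0\|_{L^{p(\cdot)}}>0$.

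I would then perturb: pick a continuous nonnegative bump $v\in\mathscr{C}([0,1])$ supported in a compact subset of $(0,1)\setminus([\alpha,\beta]\cup\{t^\ast\})$, with $0\le v\le u_0$ and $\int_0^1 v(t)\,dt>0$, where $t^\ast$ is a point at which $u_0$ attains its maximum; such $v$ exists because $u_0>0$ on $(0,1)$. Set $u^\ast:=u_0-\varepsilon v$ with $\varepsilon>0$ small. One verifies: $u^\ast\ge 0$ (from $v\le u_0$); $\|u^\ast\|_\infty=1$ (since $t^\ast\notin\mathrm{supp}\,v$, so $u^\ast(t^\ast)=1$ and $u^\ast\le u_0\le 1$ elsewhere); $\min_{t\in[\alpha,\beta]}u^\ast(t)\ge\eta_0=\eta_0\|u^\ast\|_\infty$ (since $v\equiv 0$ on $[\alpha,\beta]$); $(\bm{1}*u^\ast)(1)=(\bm{1}*u_0)(1)-\varepsilon\int_0^1 v\,dt<C_0=C_0\|u^\ast\|_\infty$ once $\varepsilon$ exceeds $((\bm{1}*u_0)(1)-C_0)/\int_0^1 v$, which shows $u^\ast\notin\mathscr{K}_\infty$; and, using the triangle inequality for the Luxemburg norm, $\|u^\ast\|_{L^{p(\cdot)}}\le\|u_0\|_{L^{p(\cdot)}}+\varepsilon\|v\|_{L^{p(\cdot)}}=1-\gamma+O(\varepsilon)$, which gives $(\bm{1}*u^\ast)(1)\ge C_0\|u^\ast\|_{L^{p(\cdot)}}$ for $\varepsilon$ small relative to $\gamma$.

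The principal obstacle will be coordinating these two opposing smallness constraints on $\varepsilon$ (plus, if the infimum in (H3)(2) is not attained, on the initial gap $(\bm{1}*u_0)(1)-C_0$): one forcing $(\bm{1}*u^\ast)(1)$ strictly below $C_0$, the other keeping $(\bm{1}*u^\ast)(1)$ above $C_0\|u^\ast\|_{L^{p(\cdot)}}$. The positive gap $\gamma$ arising from the strict form of Lemma~\ref{lem:Linf_to_Lp_embedding} is exactly the quantity that makes these inequalities simultaneously satisfiable.
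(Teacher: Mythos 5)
Your guiding observation — that $\|u\|_{L^{p(\cdot)}}<\|u\|_\infty$ strictly for nonzero, non-constant $u\in\mathscr C([0,1])$, and that this gap is what lets the hybrid coercivity survive where the $L^\infty$ coercivity fails — is correct and is, at bottom, the same phenomenon the paper exploits. However, your perturbation scheme has a genuine gap that stops it short of a proof. The construction of the bump $v$ requires $u_0=G(\cdot,s_0)/\mathscr G(s_0)$ to be strictly positive on a nonempty open subset of $(0,1)\setminus([\alpha,\beta]\cup\{t^\ast\})$; conditions (H2)--(H3) do not guarantee this. Nothing in (H3) prevents $G(\cdot,s)$ from vanishing identically off $[\alpha,\beta]$, in which case every admissible $v$ is the zero function, $\nu:=\int_0^1 v\,dt=0$, and your perturbation has nowhere to act. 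Your aside ``such $v$ exists because $u_0>0$ on $(0,1)$'' is thus an unverified hypothesis on $G$, not a consequence of the standing assumptions. This is precisely the robustness that the paper's construction buys: it builds the witness as a mollified indicator $\xi_0\phi$ of a set $E\supseteq[\alpha,\beta]$ of prescribed measure $m\in\bigl(C_0^{p/(p-1)},C_0\bigr)$, making no reference to $G$ beyond the scalars $\alpha,\beta,\eta_0,C_0$, so it cannot be undone by the fine structure of $G$.

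There is a second, quantitative gap that you yourself flag as ``the principal obstacle'' but do not close. Writing $d_0:=(\bm 1*u_0)(1)-C_0\ge 0$, $\gamma:=1-\|u_0\|_{L^{p(\cdot)}}>0$, $\nu:=\int_0^1 v$, and $w:=\|v\|_{L^{p(\cdot)}}$, the two requirements on $\varepsilon$ are $\varepsilon>d_0/\nu$ and $\varepsilon\le (d_0+C_0\gamma)/(\nu+C_0 w)$, which are simultaneously satisfiable precisely when $d_0<\gamma\nu/w$. When the infimum in (H3)(2) is not attained, you must send $d_0\to 0$ by varying $s_0$; but $u_0$, and hence $\gamma$, $\nu$, $w$, all change with $s_0$, and you offer no argument that $\gamma\nu/w$ does not degenerate to $0$ at the same or a faster rate as $d_0$. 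Calling this ``coordinating two opposing smallness constraints'' understates the issue: without a uniform lower bound on $\gamma\nu/w$ along a sequence $s_0$ realizing $d_0\to 0$, the argument does not go through. A correct proof must either establish such a bound for arbitrary admissible $G$ (which seems hard), or abandon the Green's-function base point and construct the witness from scratch, as the paper does.
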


\begin{proof}
It is sufficient to consider the constant exponent case $p(x)\equiv p>1$. First, we choose the measure $m$ of our function's support. Since $p>1$ and $C_0 \in (0,1)$, we have $p/(p-1) > 1$, which implies $C_0^{p/(p-1)} < C_0$. This gap allows us to choose a number $0<m<1$ such that each of the following is true for our function $u$ to be constructed in the next paragraph.
\begin{itemize}
    \item $m < C_0$ (which will ensure $u \notin \mathscr{K}_\infty$)
    \item $m^{(p-1)/p} > C_0$ (which will ensure $u \in \mathscr{K}_{\mathrm{hybrid}}$)
    \item $m \ge \beta - \alpha$ (to house the plateau)
\end{itemize}
Specifically, we can choose $m$ such that $\max\left\{C_0^{p/(p-1)}, \beta-\alpha\right\} < m < C_0$.

Now, let $E \subseteq [0,1]$ be a measurable set such that $[\alpha, \beta] \subseteq E$ and $|E| = m$. Fix $\xi_0 > 0$. By a standard mollification, we can construct a continuous "bump" function $\phi \in \mathscr{C}([0,1])$ with the following properties:
\begin{itemize}
    \item $0 \le \phi(t) \le 1$ for all $t \in [0,1]$;
    \item $\phi(t) \equiv 1$ for all $t \in [\alpha, \beta]$;
    \item $\operatorname{supp}(\phi) \subseteq E$; and
    \item each of the $L^1$ and $L^p$ norms of $\phi$ is arbitrarily close to those of $\mathbf{1}_E$. That is, for any $\varepsilon > 0$, we can construct $\phi$ such that both $\displaystyle\int_0^1 \phi(t) \,dt > m - \varepsilon$ and $\|\phi\|_{L^p} < m^{1/p} + \varepsilon$.
\end{itemize}
Let $u(t) := \xi_0 \phi(t)$. We now verify this $u$ has the required properties.

We verify first that $u\in\mathscr{K}_{\text{hybrid}}$.  On the one hand, by construction, $u \in \mathscr{C}([0,1])$. Note that $\|u\|_\infty = \xi_0$. Since $\phi \equiv\bm{1}$ on $[\alpha, \beta]$, we have $\displaystyle\min_{t \in [\alpha, \beta]} u(t) = \xi_0$. Thus, the plateau condition $\displaystyle\min_{t \in [\alpha, \beta]} u(t) \ge \eta_0 \|u\|_\infty$ holds for any $\eta_0 \le 1$.  On the other hand, by our choice of $m$, we have that $m^{(p-1)/p} > C_0$, which implies $m > C_0 m^{1/p}$. Since $m - C_0 m^{1/p}>0$, we can choose our approximation $\phi$ to be sufficiently close to $\mathbf{1}_E$ (i.e., choose $\varepsilon > 0$ small enough) such that $m - \varepsilon \ge C_0 (m^{1/p} + \varepsilon)$. With such a $\phi$, we have that
    \[
    (\mathbf{1}*u)(1)=\int_0^1u(t)\ dt = \xi_0 \int_0^1 \phi(t) \,dt > \xi_0 (m - \varepsilon) \ge \xi_0 C_0 (m^{1/p} + \varepsilon) > C_0 \left( \xi_0 \|\phi\|_{L^p} \right) = C_0 \|u\|_{L^p}.
    \]
Thus, both conditions are met, and so, $u \in \mathscr{K}_{\mathrm{hybrid}}$, as desired.

We next show that $u\notin\mathscr{K}_{\infty}$.  As above, we have that $\displaystyle(\bm{1}*u)(1) = \xi_0 \int_0^1 \phi(t) \,dt$. Since $\operatorname{supp}(\phi) \subseteq E$ and $0\le\phi(t)\le 1$, it follows that $\displaystyle\int_0^1 \phi(t) \,dt \le \int_E 1 \,dt = |E| = m$.  But if the condition $(\bm{1}*u)(1)\ge C_0\Vert u\Vert_{\infty}$ was to be satisfied, then it would have to hold that $\xi_0 m \ge C_0 \xi_0$.  Since this would imply that $m \ge C_0$, which would be a contradiction to our choice of $m < C_0$, we conclude that $u \notin \mathscr{K}_\infty$.

All in all, therefore, we have constructed a function $u \in \mathscr{K}_{\mathrm{hybrid}} \setminus \mathscr{K}_\infty$. Therefore, the inclusion is strict, as claimed.
\end{proof}

\begin{remark}
The enlargement induced by $\mathscr{K}_{\mathrm{hybrid}}$ is nontrivial.  In particular, the cone $\mathscr{K_{\mathrm{hybrid}}}$ includes functions whose Luxemburg norm is small relative to their sup norm; hence they satisfy the hybrid coercivity inequality but fail the sup--norm coercivity inequality. This ensures that the hybrid cone admits a broader class of functions than $\mathscr{K}_\infty$.
\end{remark}

Our next results establish two-sided localisations for $\|u\|_{L^{p(\cdot)}}$ provided that $u \in \partial \widehat V_{\rho}$. Such localisation results, in the context of the sup-norm, can be found in \cite{goodrich2025unified}. Here we obtain analogous bounds in the variable-exponent setting, formulated in terms of the Luxemburg norm while still retaining a connection to the supremum norm through the hybrid cone. In particular, the lower bound is derived via the hybrid thickness condition (i.e., Definition \ref{def:hybrid-thickness}) together with the sup-norm localisation, whereas the upper bound follows from a reverse H\"older--type argument combined with the coercivity relation $(\mathbf 1*u)(1)\geq C_0\|u\|_{L^{p(\cdot)}}$. These bounds provide the \textquotedblleft Luxemburg--norm annulus\textquotedblright\ within which fixed points will be localised, and they form a key foundation for the existence theory developed in the sequel.  We begin with a lower bound on $\Vert u\Vert_{L^{p(\cdot)}}$ for $u\in\partial\widehat{V}_{\rho}$.

\begin{lemma}\label{lem:Lp_lower_bound}
Assume condition \emph{(H2)}–\emph{(H3)} and let $\rho>0$. For any $\rho>0$, if $u\in \partial\widehat V_\rho$, then
\begin{equation*}
\|u\|_{L^{p(\cdot)}}\ge\eta_0\,(\beta-\alpha)^{\frac{1}{p^-}}\Vert u\Vert_\infty.
\end{equation*}
\end{lemma}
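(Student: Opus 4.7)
The plan is to combine the plateau part of the hybrid coercivity (i.e.\ $u\ge\eta_0\|u\|_\infty$ on $[\alpha,\beta]$) with the definition of the Luxemburg norm, using the simple observation that on the plateau $u(t)/\delta\ge 1$ whenever $\delta$ is small, so we may bound the modular from below by the \emph{smallest} exponent $p^-$.

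First, I would set $M:=\|u\|_\infty$ and note that $M>0$: indeed, $u\in\partial\widehat V_\rho$ with $\rho>0$ forces $(b*u^{p(\cdot)})(1)=\rho>0$, so $u\not\equiv 0$. Next, I would fix an arbitrary
\[
\delta\in\bigl(0,\,\eta_0 M(\beta-\alpha)^{1/p^-}\bigr).
\]
Since $\beta-\alpha\le 1$ and $p^->1$, we have $(\beta-\alpha)^{1/p^-}\le 1$, so that $\delta<\eta_0 M$; in particular $\eta_0 M/\delta>1$.

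Using the plateau inequality from $\mathscr{K}_{\mathrm{hybrid}}$ (coming from (H3)) together with monotonicity of $x\mapsto x^{p(t)}$, I would estimate the modular as
\[
\int_0^1\!\left|\frac{u(t)}{\delta}\right|^{p(t)}dt
\;\ge\;\int_\alpha^\beta\!\left(\frac{\eta_0 M}{\delta}\right)^{p(t)}dt
\;\ge\;(\beta-\alpha)\!\left(\frac{\eta_0 M}{\delta}\right)^{p^-},
\]
where the second inequality uses $\eta_0 M/\delta\ge 1$ together with $p(t)\ge p^-$ (which implies $(\eta_0 M/\delta)^{p(t)}\ge(\eta_0 M/\delta)^{p^-}$). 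The strict upper bound on $\delta$ then gives
\[
(\beta-\alpha)\!\left(\frac{\eta_0 M}{\delta}\right)^{p^-}\;>\;(\beta-\alpha)\cdot\frac{1}{\beta-\alpha}\;=\;1.
\]

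By the definition of the Luxemburg norm, no $\delta$ in this range can be admissible, so $\|u\|_{L^{p(\cdot)}}\ge \eta_0 M(\beta-\alpha)^{1/p^-}=\eta_0(\beta-\alpha)^{1/p^-}\|u\|_\infty$, as claimed. There is no real obstacle here: the only mild subtlety is choosing which of $p^\pm$ to invoke when passing from the variable exponent to a constant one, and the choice is forced by the direction of the inequality $\eta_0 M/\delta\ge 1$, which is why $p^-$ (and not $p^+$) appears in the final bound.
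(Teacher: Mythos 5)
Your proof is correct and follows essentially the same route as the paper's: you exploit the plateau inequality $u\ge\eta_0\|u\|_\infty$ on $[\alpha,\beta]$, observe that the ratio $\eta_0\|u\|_\infty/\delta$ exceeds $1$ for every $\delta$ below the claimed threshold so that $p^-$ is the right exponent to invoke, and conclude via the infimum characterisation of the Luxemburg norm. The only cosmetic difference is that the paper runs the argument as a contradiction starting from $\delta=\|u\|_{L^{p(\cdot)}}$ together with the modular--norm identity of Lemma~\ref{lem:Luxemburg_basic}, whereas you show directly that no $\delta$ below the threshold is admissible; these are the same argument.
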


\begin{proof}
Let $\delta:=\|u\|_{L^{p(\cdot)}}$ and set $m:=\beta-\alpha$. Since $u\in\partial\widehat V_\rho\subset \mathscr K_{\mathrm{hybrid}}$, we have 
\begin{equation}
\min_{t\in[\alpha,\beta]}u(t)\ge\eta_0\Vert u\Vert_{\infty}\notag
\end{equation}
and, in part by Lemma \ref{lem:Luxemburg_basic},
\begin{equation}\label{eq2.3ggg}
1\ge \int_{0}^{1} \left(\frac{u(t)}{\delta}\right)^{p(t)}\,dt
 \ge \int_{\alpha}^{\beta} \biggl(\frac{u(t)}{\delta}\biggr)^{p(t)}\,dt
 \ge \int_{\alpha}^{\beta} \biggl(\frac{\eta_0 \|u\|_\infty}{\delta}\biggr)^{p(t)}\,dt .
\end{equation}

Suppose, for contradiction, that 
\[
\delta<\eta_0m^{\tfrac{1}{p^-}}\|u\|_\infty.
\]
Define 
\[
c:=\frac{\eta_0\|u\|_\infty}{\delta}.
\]
Note that we may assume without loss of generality that $\delta>0$, for otherwise $\Vert u\Vert_{L^{p(\cdot)}}=0$ and the desired conclusion is trivially true.  Then $c>m^{-1/p^-}\ge 1$, and hence (using $p(t)\ge p^->1$) we obtain
\[
\int_\alpha^\beta 
   \biggl(\frac{\eta_0 \|u\|_\infty}{\delta}\biggr)^{p(t)} \, dt
=
\int_\alpha^\beta c^{p(t)}\,dt\ge\int_\alpha^\beta c^{p^-}\,dt
= mc^{p^-}
>m\bigl(m^{-1/p^-}\bigr)^{p^-}
= 1,
\]
which is a contradiction to inequality \eqref{eq2.3ggg}. Therefore, $\delta\ge \eta_0m^{\frac{1}{p^-}}\|u\|_\infty$, as claimed.
\end{proof}

\begin{lemma}
\label{lem:Lp_lower_combined}
Assume conditions \emph{(H2)}–\emph{(H3)} and let $\rho>0$. If $u \in \partial \widehat V_\rho$, then
\begin{equation*}
\|u\|_{L^{p(\cdot)}}\ge\eta_0(\beta-\alpha)^{\frac{1}{p^-}}
\left[\left(\frac{\rho}{(b*\bm{1})(1)}\right)^{\!\frac{1}{p^+}} + \varepsilon_1(\rho,b)\right].
\end{equation*}
where $\varepsilon_1(\rho,b)$ is the piecewise function from Lemma~\ref{lem:sup-lower-bound}.
\end{lemma}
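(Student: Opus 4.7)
The plan is to simply chain together the two previously established lemmas. First, since $u\in\partial\widehat V_\rho$, Lemma~\ref{lem:Lp_lower_bound} yields
\[
\|u\|_{L^{p(\cdot)}}\ge\eta_0\,(\beta-\alpha)^{\frac{1}{p^-}}\,\|u\|_\infty.
\]
This is the key intrinsic bridge between the Luxemburg norm and the sup-norm on $\partial\widehat V_\rho$, supplied by the hybrid thickness condition in Definition~\ref{def:hybrid-thickness}.

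Next, I would invoke Lemma~\ref{lem:sup-lower-bound}, which gives
\[
\|u\|_\infty\ge\left(\frac{\rho}{(b*\bm{1})(1)}\right)^{\frac{1}{p^+}}+\varepsilon_1(\rho,b),
\]
with $\varepsilon_1(\rho,b)$ as defined in Lemma~\ref{lem:sup-lower-bound}. Since $\eta_0(\beta-\alpha)^{1/p^-}>0$, substituting this lower bound on $\|u\|_\infty$ into the previous inequality immediately produces
\[
\|u\|_{L^{p(\cdot)}}\ge\eta_0\,(\beta-\alpha)^{\frac{1}{p^-}}\left[\left(\frac{\rho}{(b*\bm{1})(1)}\right)^{\frac{1}{p^+}}+\varepsilon_1(\rho,b)\right],
\]
which is the desired estimate.

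There is no real obstacle here: both ingredients have already been proved, and the statement is essentially their composition. The only point worth noting is that one must verify the monotonicity step, i.e., that multiplying through by the positive constant $\eta_0(\beta-\alpha)^{1/p^-}$ preserves the inequality, which is immediate since $\eta_0\in(0,1]$ and $\beta>\alpha$ by~(H3).
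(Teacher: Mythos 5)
Your proposal is correct and coincides exactly with the paper's own argument: chain Lemma~\ref{lem:Lp_lower_bound} with Lemma~\ref{lem:sup-lower-bound} and multiply through by the positive constant $\eta_0(\beta-\alpha)^{1/p^-}$. Nothing further is needed.
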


\begin{proof}
By Lemma~\ref{lem:Lp_lower_bound},
\begin{equation}
\|u\|_{L^{p(\cdot)}} \ge \eta_0 (\beta-\alpha)^{\frac{1}{p^-}} \|u\|_\infty.\notag
\end{equation}
By Lemma~\ref{lem:sup-lower-bound},
\begin{equation}
\|u\|_\infty \ge \left(\frac{\rho}{(b*\bm{1})(1)}\right)^{\frac{1}{p^+}}+\varepsilon_1(\rho,b).\notag
\end{equation}
Combine the two bounds to obtain the desired inequality.
\end{proof}

\begin{remark}
Lemma \ref{lem:Lp_lower_combined} is the Luxemburg–norm analogue of \cite[Lemma 2.3]{goodrich2025unified}.
\end{remark}

\begin{remark}
At first glance the estimate
\[
\|u\|_{L^{p(\cdot)}}\ge\eta_0\,(\beta-\alpha)^{\frac{1}{p^-}}\|u\|_\infty
\]
may look numerically \textquotedblleft weaker\textquotedblright\ because on finite measure sets one has $\|u\|_{L^{p(\cdot)}}\le\|u\|_\infty$.
This comparison is misleading for two structural reasons.
\begin{itemize}
\item \emph{Correct geometry:} The nonlocal coefficient depends on integrals of $u^{p(\cdot)}$. The Luxemburg norm is the natural gauge of the modular driving the equation; large $\|u\|_\infty$ on vanishing support can contribute almost nothing to $\displaystyle
\int_{0}^{1} |u(s)|^{p(s)}\,ds.$
\item \emph{Thickness:} The factor $(\beta-\alpha)^{\frac{1}{p^-}}$ ties the lower bound to a set of fixed measure on which $u$ is forced to be large. This excludes spike pathologies invisible to a pure $L^\infty$ analysis and, consequently, allows our framework to capture valid solutions without a large peak that would otherwise be missed.
\end{itemize}
Thus, Lemma~\ref{lem:Lp_lower_combined} provides the correct lower scale in the topology that governs the operator and the fixed-point argument.
\end{remark}

\begin{example}Fix $M>1$ and let $E\subseteq[0,1]$ be measurable. Define the test function
\[
u_M := M \bm{1}_E.
\]
Then
\[
I_{p(\cdot)}(u_M)
   = \int_E M^{p(x)}\,dx
   \in \bigl[M^{p^-}|E|,M^{p^+}|E| \,\bigr].
\]
Consequently,
\[
M\,|E|^{1/p^-} \le \|u_M\|_{L^{p(\cdot)}} \le M\,|E|^{1/p^+}.
\]
Hence, although $\|u_M\|_\infty=M$ for all non-null $E$, the Luxemburg norm depends on both the height $M$ and the measure of the support $|E|$. As $|E|\downarrow0$, $\|u_M\|_{L^{p(\cdot)}}\to0$, illustrating how the Luxemburg norm captures the effective ``spread'' of a function rather than its peak.
\end{example}

Fix a real number $q\in(1,p^-)$.  We now establish three auxiliary results: 
\begin{enumerate}
    \item a monotonicity property under exponent scaling (Lemma~\ref{lem:Lpq_le_Lp}) -- i.e.,
    \begin{equation}
    \|u\|_{L^{p(\cdot)/q}}\le\|u\|_{L^{p(\cdot)}};\notag
    \end{equation}
    \item a sharp lower bound based on the cone’s thickness condition (Lemma~\ref{lem:thick-to-Lpq}); and
    \item a reverse Hölder–type upper bound along $\partial\widehat V_\rho$ (Lemma~\ref{lem:Lux_upper_bound}). 
\end{enumerate}
Together these will play an important role in the proof of the existence theorem later in this section.

\begin{lemma}
\label{lem:Lpq_le_Lp}
Under the standing assumptions of this subsection (i.e., with $q\in(1,p^-)$ fixed),
for every $u\in L^{p(\cdot)}\big((0,1)\big)$ one has
\begin{equation*}\label{eq:Lpq-le-Lp}
\|u\|_{L^{\frac{p(\cdot)}{q}}} \le \|u\|_{L^{p(\cdot)}}.
\end{equation*}
\end{lemma}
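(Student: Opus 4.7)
The proposal is to derive the inequality from a pointwise Young-type bound combined with Lemma \ref{lem:Luxemburg_basic}, exploiting crucially that the underlying interval $[0,1]$ has unit Lebesgue measure.

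First I would dispose of the trivial case $u\equiv 0$ (both sides vanish) and assume $\delta:=\|u\|_{L^{p(\cdot)}}>0$. Lemma \ref{lem:Luxemburg_basic} then furnishes the identity $\int_0^1 |u(x)/\delta|^{p(x)}\,dx = 1$. Since $q\in(1,p^-)$, the exponent $q$ and its H\"older conjugate $q':=q/(q-1)$ are both strictly greater than $1$, so the classical Young inequality $a^{1/q}\le a/q+1/q'$, valid for $a\ge 0$, is at my disposal. Substituting $a=|u(x)/\delta|^{p(x)}$ delivers the pointwise estimate
\[
\left|\frac{u(x)}{\delta}\right|^{p(x)/q} \;\le\; \frac{1}{q}\left|\frac{u(x)}{\delta}\right|^{p(x)}+\frac{1}{q'}.
\]
Integrating over $[0,1]$ and invoking simultaneously the modular identity above and $|[0,1]|=1$ yields
\[
\int_0^1 \left|\frac{u(x)}{\delta}\right|^{p(x)/q}\,dx \;\le\; \frac{1}{q}\cdot 1+\frac{1}{q'}\cdot 1 \;=\; 1.
\]
Since the hypothesis $q<p^-$ guarantees that $p(\cdot)/q$ is itself a valid variable exponent with $p(x)/q>1$ everywhere, this last estimate shows $\delta$ is admissible in the definition of $\|u\|_{L^{p(\cdot)/q}}$; passing to the infimum then gives $\|u\|_{L^{p(\cdot)/q}}\le \delta=\|u\|_{L^{p(\cdot)}}$, as required.

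I do not expect any step to present real difficulty. The only subtlety worth flagging is that this is fundamentally a finite-measure statement: the additive constant $1/q'$ produced by Young's inequality is absorbed precisely because $|[0,1]|=1$, which is what makes the inequality the variable-exponent analogue of the classical inclusion $L^s(\mu)\subseteq L^r(\mu)$ when $r<s$ on a probability space. If one attempted a pointwise comparison of $v^{p(x)/q}$ and $v^{p(x)}$ without this absorption, neither direction holds globally (the sign depending on whether $v(x)\gtrless 1$), so Young's inequality, rather than a direct algebraic comparison, is the correct device.
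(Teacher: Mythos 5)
Your proof is correct, and it takes a genuinely different — though closely related — route from the paper's. The paper's argument sets $C:=\|u\|_{L^{p(\cdot)/q}}$, applies Jensen's inequality for the convex map $t\mapsto t^{q}$ to obtain $\int_0^1 |u/C|^{p(t)}\,dt \ge \bigl(\int_0^1 |u/C|^{p(t)/q}\,dt\bigr)^q = 1$, and then invokes strict monotonicity of the modular $\delta\mapsto\int_0^1|u/\delta|^{p(t)}\,dt$ to conclude $C\le\|u\|_{L^{p(\cdot)}}$. You instead start from $\delta:=\|u\|_{L^{p(\cdot)}}$, apply the pointwise Young inequality $a^{1/q}\le a/q+1/q'$ to $a=|u(x)/\delta|^{p(x)}$, integrate against the probability measure on $[0,1]$ to get $\int_0^1|u(x)/\delta|^{p(x)/q}\,dx\le 1$, and read off directly that $\delta$ is admissible for the $L^{p(\cdot)/q}$-norm. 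The two arguments are duals of one another (Young's inequality is essentially the supporting-line form of the convexity that Jensen exploits, and both absorb the constant precisely because $|[0,1]|=1$), but your version is arguably cleaner: it establishes admissibility of $\delta$ in one step and avoids the intermediate monotonicity argument that the paper needs to pass from a modular inequality at $C$ to a norm inequality. Your closing remark about why a naive pointwise comparison of $v^{p/q}$ with $v^{p}$ cannot work is also exactly right and a useful sanity check.
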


\begin{proof}
Set $\displaystyle r(t):=\frac{p(t)}{q}$ and $C:=\|u\|_{L^{r(\cdot)}}$. Define $\displaystyle v(t):=\frac{|u(t)|}{C}$.
By the modular–norm identity for the Luxemburg norm (see Lemma~\ref{lem:Luxemburg_basic})
\[
\int_0^1 v(t)^{r(t)} \, dt =1. \qquad
\]
Because $q>1$, Jensen's inequality yields
\[
\int_0^1 v(t)^{p(t)} \, dt
  =\int_0^1 \bigl(v(t)^{r(t)}\bigr)^{q} \, dt
  \ge\Biggl(\int_0^1 v(t)^{r(t)} \, dt\Biggr)^{q}
  =1.
\]
That is,
\[
\int_0^1 \biggl(\frac{|u(t)|}{C}\biggr)^{p(t)} \, dt \ge 1.
\]

Let $f(\delta):=\displaystyle\int_0^1 \left(\frac{|u(t)|}{\delta}\right)^{p(t)} dt$ for $\delta>0$. Since $p(t)>1$, it follows that $f$ is strictly decreasing in $\delta$. By Lemma \ref{lem:Luxemburg_basic} we have that
\[
f\big(\|u\|_{L^{p(\cdot)}}\big)=1.
\]
Because $f(C)\ge 1=f\big(\|u\|_{L^{p(\cdot)}}\big)$ and $f$ is strictly decreasing, we conclude $C\le \|u\|_{L^{p(\cdot)}}$, i.e.
\[
\|u\|_{L^{\frac{p(\cdot)}{q}}}\le\|u\|_{L^{p(\cdot)}},
\]
as desired.
\end{proof}

\begin{remark}
The condition $q<p^-$ ensures $r(\cdot)=\frac{p(\cdot)}{q}>1$.  So, Lemmata \ref{lem:Luxemburg_basic} and \ref{lem:Lpq_le_Lp}, together with Corollary \ref{cor:mod_norm_ge_1}, apply without further hypotheses. In fact, neither continuity of $u$ nor of $p(\cdot)$ is technically required here.
\end{remark}

\begin{lemma}
\label{lem:thick-to-Lpq}
Assume conditions \textnormal{(H2)--(H3)} and fix $q\in(1,p^-)$. If $u\in \mathscr{K}_{\mathrm{hybrid}}$, then
\begin{equation}\label{eq:thick-to-Lpq}
\|u\|_{L^{\frac{p(\cdot)}{q}}}\ge \eta_0(\beta-\alpha)^{\frac{q}{p^-}}\|u\|_\infty,
\end{equation}
and, hence,
\begin{equation}\label{eq:thick-to-Lpq-vs-Lp}
\eta_0(\beta-\alpha)^{q/p^-}\|u\|_{L^{p(\cdot)}} \le \|u\|_{L^{\frac{p(\cdot)}{q}}}.
\end{equation}
\end{lemma}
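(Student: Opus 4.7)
The plan is to prove the two inequalities in sequence, since the second will follow immediately from the first combined with the already-established embedding $\|u\|_{L^{p(\cdot)}}\le\|u\|_\infty$ from Lemma \ref{lem:Linf_to_Lp_embedding}. The structure of \eqref{eq:thick-to-Lpq} closely mirrors Lemma \ref{lem:Lp_lower_bound}, except that the exponent $p(\cdot)$ is replaced by $r(\cdot):=p(\cdot)/q$. The crucial observation is that the hypothesis $q\in(1,p^-)$ guarantees $r(t)\ge p^-/q>1$ on $[0,1]$, so $r(\cdot)$ is a legitimate variable exponent and the modular--norm identity of Lemma \ref{lem:Luxemburg_basic} applies.

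To prove \eqref{eq:thick-to-Lpq}, I would set $\delta:=\|u\|_{L^{p(\cdot)/q}}$ (assuming $\delta>0$, else the bound is trivial) and argue by contradiction. The modular--norm identity gives
\[
1=\int_0^1\left(\frac{u(t)}{\delta}\right)^{p(t)/q}dt\ge\int_\alpha^\beta\left(\frac{\eta_0\|u\|_\infty}{\delta}\right)^{p(t)/q}dt,
\]
where the inequality on the right uses the thickness part of the cone definition, $u(t)\ge\eta_0\|u\|_\infty$ on $[\alpha,\beta]$. Assume toward contradiction that $\delta<\eta_0(\beta-\alpha)^{q/p^-}\|u\|_\infty$ and set $c:=\eta_0\|u\|_\infty/\delta$. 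Then $c>(\beta-\alpha)^{-q/p^-}\ge 1$, so in particular $c\ge 1$ and we may bound $c^{p(t)/q}\ge c^{p^-/q}$ pointwise on $[\alpha,\beta]$. This yields
\[
1\ge\int_\alpha^\beta c^{p(t)/q}\,dt\ge(\beta-\alpha)c^{p^-/q}>(\beta-\alpha)\cdot(\beta-\alpha)^{-1}=1,
\]
a contradiction, which establishes \eqref{eq:thick-to-Lpq}.

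For \eqref{eq:thick-to-Lpq-vs-Lp}, I would simply chain \eqref{eq:thick-to-Lpq} with the sup-norm bound $\|u\|_\infty\ge\|u\|_{L^{p(\cdot)}}$ supplied by Lemma \ref{lem:Linf_to_Lp_embedding}:
\[
\|u\|_{L^{p(\cdot)/q}}\ge\eta_0(\beta-\alpha)^{q/p^-}\|u\|_\infty\ge\eta_0(\beta-\alpha)^{q/p^-}\|u\|_{L^{p(\cdot)}}.
\]
There is no genuine obstacle here; the only subtlety worth flagging is ensuring $c\ge 1$ so that one may pass to the uniform lower exponent $p^-/q$ when bounding $c^{p(t)/q}$ from below. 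This is automatic under the contradiction hypothesis together with $\beta-\alpha\le 1$, but it is the single non-cosmetic point where the constraint $q<p^-$ is used, since otherwise $r(\cdot)$ could drop to or below $1$ and both the modular identity and the monotone-power estimate would fail.
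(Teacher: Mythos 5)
Your proposal is correct and matches the paper's argument in all essentials: both restrict the modular to $[\alpha,\beta]$, use the thickness condition $u\ge\eta_0\|u\|_\infty$ there together with $p(t)\ge p^-$ and $\beta-\alpha\le 1$, and then conclude from the modular--norm relation, before chaining with Lemma \ref{lem:Linf_to_Lp_embedding} for the second inequality. The only cosmetic difference is that you run the first inequality as a contradiction, whereas the paper evaluates the modular directly at the candidate threshold $\delta_*=\eta_0(\beta-\alpha)^{q/p^-}\|u\|_\infty$ and invokes monotonicity of $S(\delta)$ — the two presentations are logically interchangeable.
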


\begin{proof}
Let $m:=\beta-\alpha\in(0,1]$ and set $\delta_*:=\eta_0\,m^{\,\frac{q}{p^-}}\|u\|_\infty$. On $[\alpha,\beta]$ it holds that $u(t)\ge \eta_0\|u\|_\infty$, and so,
\begin{align*}
\int_0^1 \biggl(\frac{u(t)}{\delta_*}\biggr)^{\tfrac{p(t)}{q}} \, dt
\ge
\int_\alpha^\beta m^{-\frac{p(t)}{p^-}} \, dt
\ge m \cdot m^{-1} \;=\; 1,
\end{align*}
seeing as $p(t)\ge p^-$. Writing
\begin{equation}
S(\delta):=\int_0^1 \left(\frac{u}{\delta}\right)^{\frac{p(t)}{q}}dt,\notag
\end{equation}
we have that $S$ strictly decreasing in $\delta$ and that
\begin{equation}
\|u\|_{L^{\frac{p(\cdot)}{q}}}=\inf\{\delta>0: S(\delta)\le 1\}.\notag
\end{equation}
Thus,
\begin{equation}
S(\delta_*)\ge 1\quad \text{implies that} \quad \delta_*\le \|u\|_{L^{\frac{p(\cdot)}{q}}},\notag
\end{equation}
proving inequality \eqref{eq:thick-to-Lpq}. Finally, since $\|u\|_{L^{p(\cdot)}}\le \|u\|_\infty$ on finite measure sets, inequality \eqref{eq:thick-to-Lpq-vs-Lp} follows at once.
\end{proof}

\begin{lemma}
\label{lem:Lux_upper_bound}
Assume conditions \textnormal{(H1)--(H3)}. Let $q\in(1,p^-)$ and suppose $b^{\frac{1}{1-q}}\in L^1((0,1])$.  For $\rho>0$ and $u\in\partial\widehat V_\rho$, set
\begin{equation*}
C_1(q):=\Biggl(\int_0^1\big(b(1-s)\big)^{\frac{1}{1-q}}\,ds\Biggr)^{1-q}
\end{equation*}
and
\begin{equation*}
K:=\frac{\rho}{C_1(q)}.
\end{equation*}
Then
\begin{equation*}
\|u\|_{L^{\frac{p(\cdot)}{q}}}
<
K^{\frac{1}{p^-}}+\varepsilon_2(K),
\qquad \text{where} \qquad
\varepsilon_2(K):=
\begin{cases}
0, & K\ge 1,\\
K^{\frac{1}{p^+}}-K^{\frac{1}{p^-}}, & 0<K<1.
\end{cases}
\end{equation*}
\end{lemma}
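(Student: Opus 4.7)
The plan is to combine a weighted H\"older inequality (to translate the modular of $p(\cdot)/q$ into the nonlocal quantity $\rho = (b*u^{p(\cdot)})(1)$) with the modular-to-norm passage of Corollaries \ref{cor:mod_norm_ge_1} and \ref{cor:mod_norm_le_1}. The hypothesis $b^{1/(1-q)} \in L^1((0,1])$ is designed precisely to make the H\"older step valid, and the boundary condition $u \in \partial\widehat V_\rho$ pins down $(b*u^{p(\cdot)})(1) = \rho$.

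First I would factor the integrand as $u(s)^{p(s)/q} = [b(1-s)^{1/q}\,u(s)^{p(s)/q}] \cdot b(1-s)^{-1/q}$ and apply H\"older on $(0,1)$ with conjugate exponents $q$ and $q/(q-1)$. The first factor, raised to the $q$-th power, integrates to $\rho$; the second, raised to $q/(q-1)$, integrates to $C_1(q)^{1/(1-q)}$. Tidying exponents yields the modular estimate
\[
\int_0^1 u(s)^{p(s)/q}\,ds \le \rho^{1/q}\,C_1(q)^{-1/q} = K^{1/q}.
\]
Next I would translate this to the Luxemburg norm by writing $r(\cdot) := p(\cdot)/q$ (so $r^- = p^-/q > 1$ and $r^+ = p^+/q$) and $\delta := \|u\|_{L^{p(\cdot)/q}}$, splitting on the size of $K$. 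If $K \ge 1$, then either $\delta \ge 1$, in which case Corollary \ref{cor:mod_norm_ge_1} yields $\delta^{r^-} \le K^{1/q}$ and hence $\delta \le K^{1/p^-}$, or else $\delta < 1 \le K^{1/p^-}$ trivially; in either sub-case $\delta \le K^{1/p^-} = K^{1/p^-} + \varepsilon_2(K)$. If $0 < K < 1$, then $K^{1/q} < 1$ forces $\delta \le 1$ via the definition of the Luxemburg norm, so Corollary \ref{cor:mod_norm_le_1} gives $\delta^{r^+} \le K^{1/q}$ and hence $\delta \le K^{1/p^+} = K^{1/p^-} + \varepsilon_2(K)$.

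The H\"older bookkeeping is mechanical; the main care lies in the case analysis for the modular-to-norm step, where the appropriate branch of Corollaries \ref{cor:mod_norm_ge_1}--\ref{cor:mod_norm_le_1} is selected by the size of $K$ and one must notice that $K < 1$ pushes $\delta$ into the unit ball, activating the $p^+$ branch. The most delicate point is the \emph{strict} inequality in the conclusion: equality in H\"older would require $u(s)^{p(s)}$ to be proportional a.e.\ to $b(1-s)^{-q/(q-1)}$, a rigidity excluded by the continuity of $u$ together with the bare integrability assumed on $b$, and this strict gap propagates through the norm translation.
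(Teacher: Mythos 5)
Your proof is correct and takes essentially the same route as the paper: the split-weight forward H\"older step you describe is algebraically identical to the paper's reverse H\"older application with exponent $1/q$, and your modular-to-norm case analysis (splitting on $K$ and invoking Corollaries \ref{cor:mod_norm_ge_1} and \ref{cor:mod_norm_le_1} accordingly, with the $K<1$ regime forcing $\|u\|_{L^{p(\cdot)/q}}\le 1$) mirrors the paper's. Your discussion of strictness -- equality would require $u^{p(\cdot)}$ proportional a.e.\ to $(b(1-\cdot))^{-q/(q-1)}$ -- is the same justification offered in the paper (also stated there somewhat loosely, as \textquotedblleft this does not occur for nontrivial $u$ on $\partial\widehat V_\rho$\textquotedblright).
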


\begin{proof}
Since $u\in\partial\widehat V_\rho$,
\[
\rho = \int_0^1 b(1-s)u(s)^{p(s)}\,ds.
\]
Applying the reverse Hölder inequality gives
\[
\rho \ge
\Bigg(\int_0^1\big(b(1-s)\big)^{\frac{1}{1-q}}\,ds\Bigg)^{1-q}
\bigg(\int_0^1 u(s)^{\frac{p(s)}{q}}\,ds\bigg)^{q}
= C_1(q)\left(I_{\frac{p(\cdot)}{q}}(u)\right)^{q},
\]
where 
\[
I_{\frac{p(\cdot)}{q}}(u)
:= \int_0^1 |u(t)|^{\frac{p(t)}{q}}\,dt
\]
is the modular. Hence,
\begin{equation*}
I_{\frac{p(\cdot)}{q}}(u) \le K^{\frac{1}{q}}.
\end{equation*}
Equality in the reverse Hölder inequality would force $u^{\frac{p(\cdot)}{q}}$ to be proportional to $\big(b(1-\cdot)\big)^{\frac{1}{1-q}}$ a.e.; this does not occur for nontrivial $u$ on $\partial\widehat V_\rho$, and so, in fact
\begin{equation}\label{eq:Ik-bound}
I_{\frac{p(\cdot)}{q}}(u) < K^{\frac{1}{q}}.
\end{equation}

We now split by the size of $\displaystyle K:=\frac{\rho}{C_1(q)}$.

\smallskip

\emph{Case $0<K<1$.} From inequality \eqref{eq:Ik-bound}, the fact that $I_{p(\cdot)/q}(u)<1$, and Corollaries \ref{cor:mod_norm_ge_1} and \ref{cor:mod_norm_le_1}, we obtain that
\[
\|u\|_{L^{\frac{p(\cdot)}{q}}}<1 
\quad\text{(since } I_{p(\cdot)/q}(u)<1 \text{ iff } \|u\|_{L^{p(\cdot)/q}}<1\text{).}
\]
Consequently, by Corollary~\ref{cor:mod_norm_le_1},
\[
I_{\frac{p(\cdot)}{q}}(u)\ge \|u\|_{L^{\frac{p(\cdot)}{q}}}^{\frac{p^+}{q}}.
\]
Thus, $\|u\|_{L^{\frac{p(\cdot)}{q}}}<K^{\frac{1}{p^+}}$, i.e.,
\[
\|u\|_{L^{\frac{p(\cdot)}{q}}} 
< K^{\frac{1}{p^-}}+\big(K^{\frac{1}{p^+}}-K^{\frac{1}{p^-}}\big)
= K^{\frac{1}{p^-}}+\varepsilon_2(K).
\]

\emph{Case $K\ge1$.} Since $q>1$ and $K\ge1$, we have $K^{1/q}\ge1$. 
Together with inequality \eqref{eq:Ik-bound}, $I_{\frac{p(\cdot)}{q}}(u)$ may lie either below or above 1. For this reason, we distinguish between two subcases.

\smallskip
\noindent
$\bullet$\, If $\|u\|_{L^{\frac{p(\cdot)}{q}}}\ge1$, then by 
Corollary~\ref{cor:mod_norm_ge_1},
\[
I_{\frac{p(\cdot)}{q}}(u)
   \ge \|u\|_{L^{\frac{p(\cdot)}{q}}}^{\frac{p^-}{q}}.
\]
Combining this with inequality \eqref{eq:Ik-bound} yields
\[
\|u\|_{L^{\frac{p(\cdot)}{q}}}
   < K^{\frac{1}{p^-}}.
\]

\smallskip
\noindent
$\bullet$\, If $0<\|u\|_{L^{\frac{p(\cdot)}{q}}}<1$, then 
Corollary~\ref{cor:mod_norm_le_1} yields
\[
I_{\frac{p(\cdot)}{q}}(u)
   \ge \|u\|_{L^{\frac{p(\cdot)}{q}}}^{\frac{p^+}{q}},
\]
which, again by way of inequality \eqref{eq:Ik-bound}, implies that
\[
\|u\|_{L^{\frac{p(\cdot)}{q}}}
   < K^{\frac{1}{p^+}}
   \le K^{\frac{1}{p^-}}
   \quad (\text{since }K\ge1\text{ and } 1<p^-\le p^+).
\]

\smallskip
All in all, in either subcase we obtain the same estimate, namely
\[
\|u\|_{L^{\frac{p(\cdot)}{q}}}
   < K^{\frac{1}{p^-}}
   = K^{\frac{1}{p^-}}+\varepsilon_2(K),
\]
because $\varepsilon_2(K)=0$ for $K\ge1$. Hence, the unified bound holds in both regimes.
\end{proof}

Combining Lemma \ref{lem:Lux_upper_bound} with Lemma~\ref{lem:thick-to-Lpq} yields a uniform Luxemburg–norm bound on $u\in\widehat V_\rho$.  This is the content of Corollary~\ref{cor:Vrho_bounded}, which is essential for our fixed point approach since Lemma \ref{lem:fixed-point-index} requires that our sets be bounded.

\begin{corollary}\label{cor:Vrho_bounded}
For each $\rho>0$, the set $\widehat{V}_{\rho}$ is bounded in $\mathscr{C}([0,1])\cap L^{p(\cdot)}([0,1])$.
\end{corollary}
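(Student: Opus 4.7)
The plan is to chain the upper bound of Lemma \ref{lem:Lux_upper_bound} with the coercive lower bound of Lemma \ref{lem:thick-to-Lpq} to obtain a uniform $\mathscr{C}([0,1])$-bound on $\widehat{V}_\rho$, and then to deduce the $L^{p(\cdot)}$-bound for free from the continuous embedding of Lemma \ref{lem:Linf_to_Lp_embedding}.

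First I would observe that although Lemma \ref{lem:Lux_upper_bound} is formally stated on the level set $\partial\widehat{V}_\rho$, its proof only uses the reverse Hölder bound $(b*u^{p(\cdot)})(1)\ge C_1(q)\bigl(I_{p(\cdot)/q}(u)\bigr)^q$ together with the constraint $(b*u^{p(\cdot)})(1)\le \rho$, which holds (with strict inequality) throughout $\widehat{V}_\rho$. Repeating the computation with $\rho_u:=(b*u^{p(\cdot)})(1)<\rho$ in place of $\rho$ yields
\[
I_{p(\cdot)/q}(u) \le \bigl(\rho_u/C_1(q)\bigr)^{1/q} < K^{1/q}, \qquad K:=\rho/C_1(q),
\]
and the case-by-case conversion from modular to Luxemburg norm carried out in the proof of Lemma \ref{lem:Lux_upper_bound} then produces, uniformly in $u\in\widehat{V}_\rho$,
\[
\|u\|_{L^{p(\cdot)/q}} < K^{1/p^-} + \varepsilon_2(K).
\]

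Next, every $u\in\widehat{V}_\rho$ lies in $\mathscr{K}_{\mathrm{hybrid}}$, so the thickness inequality \eqref{eq:thick-to-Lpq} of Lemma \ref{lem:thick-to-Lpq} applies and gives
\[
\|u\|_\infty \le \frac{\|u\|_{L^{p(\cdot)/q}}}{\eta_0(\beta-\alpha)^{q/p^-}} < \frac{K^{1/p^-}+\varepsilon_2(K)}{\eta_0(\beta-\alpha)^{q/p^-}},
\]
which is a uniform $\mathscr{C}([0,1])$-bound. Lemma \ref{lem:Linf_to_Lp_embedding} then transfers this to the uniform bound $\|u\|_{L^{p(\cdot)}}\le \|u\|_\infty$, completing the argument. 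The only mild point of care is the boundary-to-interior extension of Lemma \ref{lem:Lux_upper_bound}, which hinges on the monotonicity of the final bound in $(b*u^{p(\cdot)})(1)$; once this is noted the corollary is a direct combination of previously established estimates, and there is no genuine obstacle.
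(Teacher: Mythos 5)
Your proof follows exactly the paper's route: extend the upper bound of Lemma~\ref{lem:Lux_upper_bound} from $\partial\widehat{V}_\rho$ to all of $\widehat{V}_\rho$ by applying it at $\rho_u:=(b*u^{p(\cdot)})(1)<\rho$ and invoking monotonicity of the bound in that argument, then chain with Lemma~\ref{lem:thick-to-Lpq} for a uniform sup-norm bound, and finish with Lemma~\ref{lem:Linf_to_Lp_embedding}. This is correct and the same sequence of lemmas, in the same order, that the paper uses.
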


\begin{proof}
Fix $\rho>0$ and $u\in\widehat V_\rho$. By the definition of $\widehat V_\rho$, we have
\[
\left(b*u^{p(\cdot)}\right)(1) < \rho.
\]
In particular, if we set
\[
K':=\frac{\left(b*u^{p(\cdot)}\right)(1) }{C_1(q)} \in(0,+\infty),
\]
where
\begin{equation}
C_1(q):=\left(\int_0^1\big(b(1-s)\big)^{\frac{1}{1-q}}\ ds\right)^{1-q}\notag
\end{equation}
is as in the statement of Lemma \ref{lem:Lux_upper_bound}, then
\[
K' < \frac{\rho}{C_1(q)}.
\]
By Lemma~\ref{lem:Lux_upper_bound} (applied with $K'$), and since the bound there is
nondecreasing in $K$, we obtain the uniform estimate
\[
\|u\|_{L^{\frac{p(\cdot)}{q}}} 
   < M_{\frac{p}{q}}(\rho)
   := \left(\frac{\rho}{C_1(q)}\right)^{\!\frac{1}{p^-}}
      + \varepsilon_2\!\left(\frac{\rho}{C_1(q)}\right),
\]
where
\[
\varepsilon_2\!\left(\frac{\rho}{C_1(q)}\right)
   :=
   \begin{cases}
      0, & \left(\frac{\rho}{C_1(q)}\right)\ge 1,\\[3pt]
      \left(\frac{\rho}{C_1(q)}\right)^{\!\frac{1}{p^+}}
      -\left(\frac{\rho}{C_1(q)}\right)^{\!\frac{1}{p^-}},
      & 0<\frac{\rho}{C_1(q)}<1.
   \end{cases}
\]
By Lemma~\ref{lem:thick-to-Lpq}, this yields a uniform sup–norm bound:
\[
\|u\|_\infty \le \eta_0^{-1}(\beta-\alpha)^{-q/p^-}\,M_{\frac{p}{q}}(\rho)<+\infty.
\]
Finally, by Lemma~\ref{lem:Linf_to_Lp_embedding},
$\|u\|_{L^{p(\cdot)}}\le \|u\|_\infty$. Hence, $\widehat V_\rho$ is uniformly bounded in both
$\mathscr{C}([0,1])$ and $L^{p(\cdot)}([0,1])$.
\end{proof}

\begin{remark}
Corollary \ref{cor:Vrho_bounded} is the hybrid cone analogue of  \cite[Lemma 2.4]{goodrich2025unified}.
\end{remark}

\begin{remark}\label{remark2.22aaa}
For later estimates we record a consolidated pointwise bound on $u \in \widehat{V}_{\rho}.$

\smallskip
\noindent\emph{(i) Luxemburg-driven bound.}
By Lemma~\ref{lem:thick-to-Lpq} and Corollary~\ref{cor:Vrho_bounded} we have, for every 
$u \in \widehat{V}_{\rho}$,
\[
\|u\|_\infty \le B_{\infty,\rho}^{\mathrm{Lux}} 
\quad\text{with}\quad \displaystyle 
B_{\infty,\rho}^{\mathrm{Lux}}
:= \eta_0^{-1}(\beta-\alpha)^{-q/p^-}\,M_{\frac{p}{q}}(\rho),
\]
where, from Lemma~\ref{lem:Lux_upper_bound}, 
\[
M_{\frac{p}{q}}(\rho)=\left(\frac{\rho}{C_1(q)}\right)^{\frac{1}{p^-}}+\varepsilon_2\left(\frac{\rho}{C_1(q)}\right),
\qquad
C_1(q):=\left(\int_0^1\big(b(1-s)\big)^{\frac{1}{1-q}}\,ds\right)^{1-q}.
\]

\smallskip
\noindent\emph{(ii) Supremum–framework bound.}
From \cite[Lemma 2.4]{goodrich2025unified} one has for $u\in \widehat V_{\rho}$,
\[
\|u\|_\infty\le B_{\infty,\rho}^{\mathrm{Old}}
\quad\text{with}\quad
B_{\infty,\rho}^{\mathrm{Old}}:= C_0^{-1}2^{\frac{p^{+}-q}{p^-}}
\left[\rho^{\frac{1}{q}}\left(\left(b^{\frac{1}{1-q}}*\bm{1}\right)(1)\right)^{\frac{q-1}{q}}+1\right]^{\frac{q}{p^-}}.
\]

\smallskip
\noindent\emph{(iii) Consolidated bound.}
Using the definitions of $B_{\infty,\rho}^{\mathrm{Lux}}$ and 
$B_{\infty,\rho}^{\mathrm{Old}}$ above,
define
\[
B_{\infty,\rho}:=\min\left\{B_{\infty,\rho}^{\mathrm{Lux}},\; B_{\infty,\rho}^{\mathrm{Old}}\right\}.
\]
Then for every $u\in {\widehat V_{\rho}}$,
\[
\|u\|_\infty\le B_{\infty,\rho}.
\]
This $B_{\infty,\rho}$ depends only on the structural data $(\alpha,\beta,\eta_0,C_0,p^\pm,q)$,
on $b$ through both $C_1(q)$ and $(b^{\frac{1}{1-q}}*\bm{1})(1)$, and on the annulus parameter $\rho$,
but is independent of the particular $u$. 

It is instructive to compare the two estimates. For large $\rho$ the bounds, $B_{\infty,\rho}^{\mathrm{Lux}}$ and $B_{\infty,\rho}^{\mathrm{Old}}$, are of comparable size, and which one is sharper depends on the specific structural constants of the problem. In the small--$\rho$ regime ($\rho/C_1(q)<1$), the situation is clearer. The Luxemburg-driven estimate $B_{\infty,\rho}^{\mathrm{Lux}}$ satisfies $B_{\infty,\rho}^{\mathrm{Lux}}\to 0$ as $\rho\downarrow 0$, whereas $B_{\infty,\rho}^{\mathrm{Old}}$ tends to the constant $\displaystyle C_0^{-1}2^{\frac{p^+-q}{p^-}}>0$. Thus, for all sufficiently small $\rho$, the Luxemburg-based bound is guaranteed to be strictly tighter than the old sup--norm estimate.
\end{remark}

Because we will study \eqref{eq1.1} via a topological fixed point approach, we define the operator $T\ : \ \overline{\widehat{V}_{\rho_2}}\setminus\widehat{V}_{\rho_1}\rightarrow\mathscr{C}([0,1])$ by
\begin{equation*}
(Tu)(t):=\lambda\int_0^1\left(A\left(\left(b*u^{p(\cdot)}\right)(1)\right)\right)^{-1}G(t,s)f(s,u(s))\,ds.
\end{equation*}
Recalling from condition (H1.3) that $A(t)>0$ for $t\in\big[\rho_1,\rho_2\big]$, it follows that $T$ is well defined on its indicated domain.  Additionally, we note that a nontrivial fixed point of $T$
\begin{enumerate}
\item is a nontrivial solution of \eqref{eq1.1};

\item satisfies the boundary data generated by the Green's function $G$; and so

\item is a positive solution of the problem defined by parts (1)--(2).
\end{enumerate}
Our next lemma is a standard result regarding $T$.

\begin{lemma}
Assume conditions \emph{(H1)}--\emph{(H3)}. Then the operator $T$ is completely continuous on $\overline{\widehat{V}_{\rho_2}}\setminus \widehat{V}_{\rho_1}$. Moreover,
\[
T\!\left(\overline{\widehat{V}_{\rho_2}}\setminus \widehat{V}_{\rho_1}\right)
   \subseteq \mathscr{K}_{\mathrm{hybrid}}.
\]
\end{lemma}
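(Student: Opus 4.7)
The overall plan is to establish complete continuity of $T$ via a standard Arzel\`a--Ascoli argument, and then to verify cone invariance using (H3) together with the embedding $\mathscr{C}([0,1])\hookrightarrow L^{p(\cdot)}([0,1])$ from Lemma~\ref{lem:Linf_to_Lp_embedding}. The setup rests on the observation that for $u\in \overline{\widehat{V}_{\rho_2}}\setminus\widehat{V}_{\rho_1}$, the quantity $(b*u^{p(\cdot)})(1)$ lies in the compact interval $[\rho_1,\rho_2]$, on which (H1.3) together with the continuity of $A$ yields $A_{\min}:=\min_{t\in[\rho_1,\rho_2]}A(t)>0$; hence $A((b*u^{p(\cdot)})(1))^{-1}$ is well defined and uniformly bounded above by $A_{\min}^{-1}$. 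Moreover, Corollary~\ref{cor:Vrho_bounded} supplies a uniform sup-norm bound $\|u\|_\infty\le B_{\infty,\rho_2}$ on the annulus, and hence $f(s,u(s))$ is uniformly dominated by $M_f:=f^{M}_{[0,1]\times[0,B_{\infty,\rho_2}]}$.

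For the continuity of $T$, I would take $u_n\to u$ uniformly on $[0,1]$ with $u_n$ in the annulus, observe that $u_n(s)^{p(s)}\to u(s)^{p(s)}$ uniformly (via continuity of $x\mapsto x^{p(s)}$ and the uniform bound), and deduce via dominated convergence that $(b*u_n^{p(\cdot)})(1)\to (b*u^{p(\cdot)})(1)$; combined with the continuity of $A$ and the uniform continuity of $f$ on $[0,1]\times[0,B_{\infty,\rho_2}]$, this will give $(Tu_n)(t)\to (Tu)(t)$ uniformly in $t$. For compactness, the uniform pointwise bound $\|Tu\|_\infty\le \lambda A_{\min}^{-1}M_f\int_0^1\mathscr{G}(s)\,ds$, coupled with the equicontinuity estimate
\[
|(Tu)(t_1)-(Tu)(t_2)|\le \lambda A_{\min}^{-1}M_f\int_0^1|G(t_1,s)-G(t_2,s)|\,ds,
\]
which is justified by uniform continuity of $G$ on $[0,1]^2$, allows Arzel\`a--Ascoli to conclude that $T$ maps the annulus to a relatively compact subset of $\mathscr{C}([0,1])$.

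For the inclusion $T(\overline{\widehat{V}_{\rho_2}}\setminus\widehat{V}_{\rho_1})\subseteq\mathscr{K}_{\text{hybrid}}$, nonnegativity of $Tu$ is immediate from $\lambda>0$, $A>0$ on $[\rho_1,\rho_2]$, $G\ge 0$, and $f\ge 0$. Writing $c(u):=\lambda A((b*u^{p(\cdot)})(1))^{-1}>0$, the plateau condition follows from (H3)(1) via
\[
\min_{t\in[\alpha,\beta]}(Tu)(t)\ge c(u)\int_0^1\eta_0\mathscr{G}(s)f(s,u(s))\,ds\ge \eta_0\|Tu\|_\infty,
\]
where the last inequality uses $\|Tu\|_\infty\le c(u)\int_0^1\mathscr{G}(s)f(s,u(s))\,ds$. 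For the hybrid coercivity, Fubini followed by (H3)(2) will yield
\[
(\bm{1}*Tu)(1)=c(u)\int_0^1 f(s,u(s))\int_0^1 G(t,s)\,dt\,ds\ge C_0\,c(u)\int_0^1 \mathscr{G}(s)f(s,u(s))\,ds\ge C_0\|Tu\|_\infty,
\]
and finally Lemma~\ref{lem:Linf_to_Lp_embedding} gives $\|Tu\|_\infty\ge \|Tu\|_{L^{p(\cdot)}}$, so $(\bm{1}*Tu)(1)\ge C_0\|Tu\|_{L^{p(\cdot)}}$. The only non-classical step is the new hybrid coercivity, but it reduces at once to the familiar sup-norm coercivity via Lemma~\ref{lem:Linf_to_Lp_embedding}; consequently, no genuine obstacle is anticipated, and the lemma amounts to combining classical compactness arguments with the sup-to-Luxemburg embedding.
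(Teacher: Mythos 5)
The proposal is correct and takes essentially the same approach as the paper: the paper omits the proof, pointing to the standard Arzel\`a--Ascoli argument in earlier works and noting that the only new element is the hybrid coercivity condition, which reduces to the classical sup-norm coercivity via Lemma~\ref{lem:Linf_to_Lp_embedding} -- precisely the observation you make at the end of your argument.
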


\begin{proof}
Omitted; the argument is standard -- see, for example, \cite{goodrich_existence_2022,goodrich_topological_2021-1}, with the modification being that the convolution condition now involves 
$\|Tu\|_{L^{p(\cdot)}}$.  However, since $\|Tu\|_{L^{p(\cdot)}} \le \|Tu\|_\infty$, due to Lemma~\ref{lem:Linf_to_Lp_embedding}, the proof does not change in a materially significant manner.
\end{proof}

Finally, we state the topological fixed point theorem \cite[Lemma 2.1]{cianciaruso_solutions_2017-1} that we use in the proof of our existence result.

\begin{lemma}\label{lem:fixed-point-index}
Let $U$ be a bounded open set and, with $\mathscr{K}$ a cone in a real Banach space $\mathscr{X}$, suppose both that $U_{\mathscr{K}}:=U\cap\mathscr{K}\supseteq\{\bm{0}\}$ and that $\overline{U_{\mathscr{K}}}\neq\mathscr{K}$. Assume that $T:\overline{U_{\mathscr{K}}}\to\mathscr{K}$ is a compact map such that $x\neq Tx$ for each $x\in\partial U_{\mathscr{K}}$. Then the fixed point index $i_{\mathscr{K}}\!\left(T,U_{\mathscr{K}}\right)$ has the following properties.
\begin{enumerate}
\item If there exists $v\in\mathscr{K}\setminus\{\bm{0}\}$ such that $x\neq Tx+\lambda v$ for each $x\in\partial U_{\mathscr{K}}$ and each $\lambda>0$, then $i_{\mathscr{K}}\!\left(T,U_{\mathscr{K}}\right)=0$.

\item If $\mu x\neq Tx$ for each $x\in\partial U_{\mathscr{K}}$ and for each $\mu\ge1$, then $i_{\mathscr{K}}\!\left(T,U_{\mathscr{K}}\right)=1$.

\item If $i_{\mathscr{K}}\!\left(T,U_{\mathscr{K}}\right)\neq0$, then $T$ has a fixed point in $U_{\mathscr{K}}$.

\item Let $U^1$ be open in $X$ with $\overline{U_{\mathscr{K}}^1}\subseteq U_{\mathscr{K}}$. If $i_{\mathscr{K}}\!\left(T,U_{\mathscr{K}}\right)=1$ and $i_{\mathscr{K}}\!\left(T,U_{\mathscr{K}}^{1}\right)=0$, then $T$ has a fixed point in $U_{\mathscr{K}}\setminus\overline{U_{\mathscr{K}}^{1}}$. The same result holds if $i_{\mathscr{K}}\!\left(T,U_{\mathscr{K}}\right)=0$ and $i_{\mathscr{K}}\!\left(T,U_{\mathscr{K}}^{1}\right)=1$.
\end{enumerate}
\end{lemma}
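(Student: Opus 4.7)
The statement is a standard catalogue of fixed point index axioms for a compact self-map of a cone, and the plan is to derive the four properties from the Leray--Schauder degree via a retraction argument. Since $\mathscr{K}$ is a closed convex subset of the real Banach space $\mathscr{X}$, there is a continuous retraction $r:\mathscr{X}\to\mathscr{K}$ (e.g., by Dugundji extension). I would extend $T$ to $\widetilde T := T\circ r$ on a bounded open neighbourhood $\widetilde U\subset\mathscr{X}$ with $\widetilde U\cap\mathscr{K}=U_{\mathscr{K}}$, note that $\widetilde T$ is compact, and define
\[
i_{\mathscr{K}}(T,U_{\mathscr{K}}) := \deg_{\mathrm{LS}}\!\left(I-\widetilde T,\widetilde U,\bm{0}\right).
\]
Any fixed point of $\widetilde T$ in $\widetilde U$ lies in the range of $T$ and hence in $\mathscr{K}$, so it satisfies $r(x)=x$ and thus $Tx=\widetilde Tx=x$; this yields property (3) at once, and standard excision/independence checks make the definition well posed.

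For property (2) I would use the linear homotopy $H_s(x):=sTx$, $s\in[0,1]$, which is admissible on $\partial U_{\mathscr{K}}$: the hypothesis $\mu x\neq Tx$ for every $\mu\ge 1$ rewrites, upon setting $\mu=1/s$, as $x\neq sTx$ for each $s\in(0,1]$; at $s=0$ the only fixed point is $\bm{0}\in U_{\mathscr{K}}\setminus\partial U_{\mathscr{K}}$. Homotopy invariance combined with the normalisation $i_{\mathscr{K}}(\bm{0},U_{\mathscr{K}})=1$ then produces the claimed value. For property (1) I would pick the distinguished $v\in\mathscr{K}\setminus\{\bm{0}\}$ supplied by the hypothesis and employ the homotopy $H_\lambda(x):=Tx+\lambda v$ for $\lambda\in[0,\lambda_*]$; the hypothesis directly excludes fixed points on $\partial U_{\mathscr{K}}$ for every $\lambda>0$, while boundedness of $U_{\mathscr{K}}$, compactness of $T\!\left(\overline{U_{\mathscr{K}}}\right)$, and $v\neq\bm{0}$ together let me choose $\lambda_*$ so large that $H_{\lambda_*}$ has no fixed point in $\overline{U_{\mathscr{K}}}$ at all, forcing the index to be zero via homotopy invariance.

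Finally, property (4) is an additivity-plus-solution argument: assuming $T$ has no fixed point on $\partial U^1_{\mathscr{K}}$ (otherwise the conclusion is immediate from (3) applied to $U^1_{\mathscr{K}}$), additivity yields
\[
i_{\mathscr{K}}(T,U_{\mathscr{K}})
 = i_{\mathscr{K}}(T,U^1_{\mathscr{K}})
 + i_{\mathscr{K}}\!\left(T,U_{\mathscr{K}}\setminus\overline{U^1_{\mathscr{K}}}\right),
\]
and the mismatch between the two given indices (one is $0$, the other $1$) forces the annular term to be nonzero, whence property (3) delivers the desired fixed point in $U_{\mathscr{K}}\setminus\overline{U^1_{\mathscr{K}}}$. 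The main obstacle is the uniform choice of $\lambda_*$ in property (1): one must verify that $\sup_{x\in\overline{U_{\mathscr{K}}}}\|Tx\|<+\infty$, which combined with $\|v\|>0$ ensures that $\|Tx+\lambda_* v\|$ can be pushed beyond the diameter of $U_{\mathscr{K}}$, so $Tx+\lambda_* v$ leaves $\overline{U_{\mathscr{K}}}$ for every admissible $x$. This is exactly what the compactness of $T$ and the boundedness of $U_{\mathscr{K}}$ are there to provide; all other steps are routine applications of the degree-theoretic toolkit.
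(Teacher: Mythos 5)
The paper does not prove this lemma: it is imported verbatim, with citation, from Cianciaruso--Infante--Pietramala \cite[Lemma 2.1]{cianciaruso_solutions_2017-1}, and the four properties are the standard existence, normalisation/homotopy, solution, and additivity axioms of the cone fixed point index as one finds in Amann, Deimling, or Guo--Lakshmikantham. Your sketch is therefore not competing with an in-paper argument; it is a reconstruction of the classical development underlying the cited result, and as a reconstruction it is essentially correct: defining the index by $\deg_{\mathrm{LS}}(I-T\circ r,\widetilde U,\bm 0)$ with a Dugundji retraction $r$ onto the closed convex set $\mathscr K$, deducing (3) from the fact that fixed points of $T\circ r$ land in $\mathscr K$, proving (2) with the contraction homotopy $H_s=sT$ and the substitution $\mu=1/s\ge 1$, proving (1) by sliding along $H_\lambda=T+\lambda v$ until boundedness of $T(\overline{U_{\mathscr K}})$ and $v\neq\bm 0$ push the image out of $\overline{U_{\mathscr K}}$, and deriving (4) from additivity plus (3), is exactly the route taken in the literature.

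Two small points worth making explicit. First, in (1) the homotopy must remain $\mathscr K$-valued at every stage; this is automatic because $Tx\in\mathscr K$, $v\in\mathscr K$, $\lambda\ge 0$, and $\mathscr K$ is a cone, but it is the reason $v$ must be chosen in $\mathscr K$ rather than merely in $\mathscr X$, and you should say so. Second, in (4) the hypothesis that $i_{\mathscr K}(T,U^1_{\mathscr K})$ is \emph{assigned a value} already presupposes $x\neq Tx$ on $\partial U^1_{\mathscr K}$, so your preliminary case ``if $T$ has a fixed point on $\partial U^1_{\mathscr K}$'' is vacuous under the stated hypotheses; it is cleaner to observe directly that the index on $U^1_{\mathscr K}$ being defined supplies the admissibility needed for the additivity formula. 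With those clarifications your sketch matches the standard proof of the cited lemma.
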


\subsection{Existence Result and Discussion}

With the necessary preliminaries in place, we now prove an existence theorem for \eqref{eq1.1} subject to the boundary data encoded by the Green’s function $G$. For $\rho>0$, we introduce the following notation, recalling the notation $\displaystyle B_{\infty,\rho}:=\min\left\{B_{\infty,\rho}^{\mathrm{Lux}},\; B_{\infty,\rho}^{\mathrm{Old}}\right\}$ introduced in Remark \ref{remark2.22aaa}.
\begin{align*}
m_{\rho} 
&:= \left[\left(\frac{\rho}{(b*\bm{1})(1)}\right)^{\frac{1}{p^+}} 
              + \varepsilon_1(\rho,b)\right] \\[4pt]
M_{\rho} 
&:= \frac{1}{\eta_0\,(\beta-\alpha)^{\tfrac{q}{p^-}}}
      \left[\left(\frac{\rho}{\mathrm{C}_1(q)}\right)^{\tfrac{1}{p^-}} 
            + \varepsilon_2\left(\frac{\rho}{\mathrm{C}_1(q)}\right)\right]
            \\[4pt]
{C}_2(q) 
&:= \left(\int_0^1 \bigl|b(1-s)\bigr|^{\tfrac{q}{q-1}}\,ds\right)^{\tfrac{q-1}{q}}
   \\[8pt]
N_1 
&:= \frac{\lambda}{A(\rho_1)}
      f^{m}_{[\alpha,\beta]\times[\eta_0 m_{\rho_1},\,B_{\infty,\rho_1}]}
      \inf_{t\in[\alpha,\beta]}\ \int_{\alpha}^{\beta}\! G(t,s)\,ds
      \\[8pt]
Y_1 
&:= N_1(\beta-\alpha)^{\tfrac{q}{p^-}}
\end{align*}
These quantities will be used throughout the remainder of this paper.

\begin{theorem}\label{theorem2.25}
Suppose that each of conditions \textnormal{(H1)--(H3)} holds and that $b^{\frac{1}{1-q}}\in L^1((0,1])$ for some $q\in(1,p^-)$. In addition, suppose that the numbers $\rho_1$ and $\rho_2$ from condition $(\mathrm{H}1.3)$ are such that each of the following inequalities holds.
\vskip0.2cm
\begin{enumerate}
\item \begin{equation}\label{eq:theorem_condition1_explicit}
\begin{cases}
C_1(q) Y_1^{p^-}>\rho_1 & \text{if } \dfrac{\rho_1}{C_1(q)} \ge 1 \\
C_1(q) Y_1^{p^+}>\rho_1 & \text{if } 0 < \dfrac{\rho_1}{C_1(q)} < 1
\end{cases}
\end{equation}
\vskip0.2cm
\item 
\begin{equation}\label{eq:theorem_condition2_explicit}
\begin{cases}
C_2(q) \left(\left[\frac{\lambda f^{M}_{[0,1]\times[0,B_{\infty,\rho_2}]}}{A(\rho_2)}\right]\left\|\int_0^1 G(\cdot,s)\,ds\right\|_{L^{q\cdot p(\cdot)}}\right)^{p^+} < \rho_2 & \text{if } \dfrac{\rho_2}{C_2(q)} \ge 1 \\[6pt]
C_2(q) \left(\left[\frac{\lambda f^{M}_{[0,1]\times[0,B_{\infty,\rho_2}]}}{A(\rho_2)}\right]\left\|\int_0^1 G(\cdot,s)\,ds\right\|_{L^{q\cdot p(\cdot)}}\right)^{p^-} < \rho_2 & \text{if } 0 < \dfrac{\rho_2}{C_2(q)} < 1
\end{cases}
\end{equation}
\end{enumerate}
\vskip0.2cm
Then problem \eqref{eq1.1}, subject to the boundary data generated by the Green's function $G$, has at least one positive solution $u_0\in\widehat{V}_{\rho_2}\setminus\overline{\widehat{V}_{\rho_1}}$.  Moreover, $u_0$ satisfies the Luxemburg norm localisation
\begin{equation}
\eta_0(\beta-\alpha)^{\frac{1}{p^-}}m_{\rho_1}\le\Vert u\Vert_{L^{p(\cdot)}}\le M_{\rho_2}.
\end{equation}
\end{theorem}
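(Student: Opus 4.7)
The plan is to obtain a nontrivial fixed point of $T$ via the fixed point index formulation in Lemma~\ref{lem:fixed-point-index}. Specifically, I aim to establish $i_{\mathscr{K}_{\mathrm{hybrid}}}(T,\widehat{V}_{\rho_1})=0$ using part~(1) of that lemma and $i_{\mathscr{K}_{\mathrm{hybrid}}}(T,\widehat{V}_{\rho_2})=1$ using part~(2). Since $\rho_1<\rho_2$ forces $\overline{\widehat{V}_{\rho_1}}\subsetneq\widehat{V}_{\rho_2}$, part~(4) will then deliver a fixed point $u_0\in\widehat{V}_{\rho_2}\setminus\overline{\widehat{V}_{\rho_1}}$; since $\bigl(b*u_0^{p(\cdot)}\bigr)(1)\in(\rho_1,\rho_2)$ sits inside the positivity window of $A$ prescribed by (H1.3), this $u_0$ will automatically be a positive solution of \eqref{eq1.1} with the boundary data encoded by $G$.

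For the inner index I would take $v\equiv\bm{1}\in\mathscr{K}_{\mathrm{hybrid}}\setminus\{\bm{0}\}$ (easy to verify, since $\eta_0,C_0\le 1$ and $\|\bm{1}\|_{L^{p(\cdot)}}=1$) and argue by contradiction: suppose $u\in\partial\widehat{V}_{\rho_1}$ and $\mu>0$ satisfy $u=Tu+\mu\bm{1}$. Discarding the nonnegative $\mu\bm{1}$ term, the pointwise bounds $\eta_0 m_{\rho_1}\le u(s)\le B_{\infty,\rho_1}$ on $[\alpha,\beta]$ (from Lemma~\ref{lem:sup-lower-bound} and Remark~\ref{remark2.22aaa}) together with integration of $G$ against the minimum of $f$ over that rectangle give $\min_{t\in[\alpha,\beta]}u(t)\ge N_1$. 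Testing the modular of $u/Y_1$ at exponent $p(\cdot)/q$ on $[\alpha,\beta]$ and using $p(t)\ge p^-$ and $(\beta-\alpha)\le 1$ yields $\int_0^1(u/Y_1)^{p(t)/q}\,dt\ge 1$, whence $\|u\|_{L^{p(\cdot)/q}}\ge Y_1$. On the other hand, Lemma~\ref{lem:Lux_upper_bound} at $\rho=\rho_1$ provides the companion upper bound $\|u\|_{L^{p(\cdot)/q}}<(\rho_1/C_1(q))^{1/p^-}+\varepsilon_2(\rho_1/C_1(q))$, which collapses to $(\rho_1/C_1(q))^{1/p^-}$ when $\rho_1/C_1(q)\ge 1$ and to $(\rho_1/C_1(q))^{1/p^+}$ when $\rho_1/C_1(q)<1$. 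Hypothesis~\eqref{eq:theorem_condition1_explicit} is precisely the statement that $Y_1$ strictly exceeds this quantity in the corresponding regime, which produces the required contradiction.

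For the outer index I would suppose $\mu u=Tu$ for some $\mu\ge 1$ and $u\in\partial\widehat{V}_{\rho_2}$, so that $u\le Tu$ pointwise. Writing $K:=\lambda f^{M}_{[0,1]\times[0,B_{\infty,\rho_2}]}/A(\rho_2)$ and $\Phi(t):=\int_0^1 G(t,s)\,ds$, the uniform bound $u\le B_{\infty,\rho_2}$ from Remark~\ref{remark2.22aaa} gives $u(t)\le K\Phi(t)$. Substituting into $(b*u^{p(\cdot)})(1)=\rho_2$ and applying H\"older with exponents $q/(q-1)$ and $q$ to the factors $b(1-s)$ and $(K\Phi(s))^{p(s)}$ yields $\rho_2\le C_2(q)\,I_{q\cdot p(\cdot)}(K\Phi)^{1/q}$. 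In the regime $\rho_2/C_2(q)\ge 1$, this forces $I_{q\cdot p(\cdot)}(K\Phi)\ge 1$, hence $\|K\Phi\|_{L^{q\cdot p(\cdot)}}\ge 1$, and Corollary~\ref{cor:mod_norm_ge_1} upgrades the inequality to $\rho_2\le C_2(q)(K\|\Phi\|_{L^{q\cdot p(\cdot)}})^{p^+}$, directly contradicting the corresponding branch of~\eqref{eq:theorem_condition2_explicit}. In the regime $\rho_2/C_2(q)<1$, the relevant branch of~\eqref{eq:theorem_condition2_explicit} itself forces $(K\|\Phi\|_{L^{q\cdot p(\cdot)}})^{p^-}<\rho_2/C_2(q)<1$, hence $\|K\Phi\|_{L^{q\cdot p(\cdot)}}<1$, and Corollary~\ref{cor:mod_norm_le_1} then gives $\rho_2\le C_2(q)(K\|\Phi\|_{L^{q\cdot p(\cdot)}})^{p^-}$, again contradicting that branch. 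Homogeneity of the Luxemburg norm, $\|K\Phi\|_{L^{q\cdot p(\cdot)}}=K\|\Phi\|_{L^{q\cdot p(\cdot)}}$, is what makes~\eqref{eq:theorem_condition2_explicit} the correct phrasing.

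Finally, Lemma~\ref{lem:fixed-point-index}(4) supplies the fixed point $u_0$. Setting $\rho_0:=(b*u_0^{p(\cdot)})(1)\in(\rho_1,\rho_2)$, Lemma~\ref{lem:Lp_lower_combined} applied at $\rho_0$ together with monotonicity of $\rho\mapsto m_\rho$ produces the lower localisation $\|u_0\|_{L^{p(\cdot)}}\ge\eta_0(\beta-\alpha)^{1/p^-}m_{\rho_1}$, while combining Lemma~\ref{lem:thick-to-Lpq} with Lemma~\ref{lem:Lux_upper_bound} at $\rho_0$ and the same monotonicity argument yields the upper localisation $\|u_0\|_{L^{p(\cdot)}}\le M_{\rho_2}$. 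I expect the principal obstacle to lie in the case analysis in the outer-index step: specifically, ensuring that the modular-to-norm conversion (which forces the use of $p^+$ or $p^-$ depending on whether the Luxemburg norm exceeds $1$) aligns cleanly with the $\rho_2/C_2(q)\gtreqless 1$ dichotomy appearing in~\eqref{eq:theorem_condition2_explicit}, and likewise that the collapse of $\varepsilon_2$ in the inner-index step matches the dichotomy used in~\eqref{eq:theorem_condition1_explicit}.
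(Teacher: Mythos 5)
Your proposal is correct and tracks the paper's topological scaffolding exactly: index zero at $\rho_1$ via Lemma~\ref{lem:fixed-point-index}(1), index one at $\rho_2$ via part~(2), the annular fixed point via part~(4), and the localisation via Lemmata~\ref{lem:Lp_lower_combined}, \ref{lem:thick-to-Lpq}, and~\ref{lem:Lux_upper_bound}. The outer-index branch and the final localisation match the paper's argument in all essentials.

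Where you genuinely streamline is the inner-index step. The paper, after using $u\ge Tu$ to deduce $\rho_1\ge\bigl(b*(Tu)^{p(\cdot)}\bigr)(1)$, applies reverse H\"older \emph{to $Tu$} to get $\rho_1\ge C_1(q)\bigl(I_{p(\cdot)/q}(Tu)\bigr)^{q}$, separately establishes $\|Tu\|_{L^{p(\cdot)/q}}\ge Y_1$ via a dedicated infimum--contradiction argument, and then reconciles the two estimates through the modular--norm dichotomy of Corollaries~\ref{cor:mod_norm_ge_1}--\ref{cor:mod_norm_le_1}. You instead observe that $u=Tu+\mu\bm{1}$ with $\mu>0$ gives $u\ge Tu\ge N_1$ on $[\alpha,\beta]$, whence the \emph{same} modular test yields $\|u\|_{L^{p(\cdot)/q}}\ge Y_1$ for $u$ itself; the matching upper bound is then read off directly from Lemma~\ref{lem:Lux_upper_bound} applied to $u\in\partial\widehat{V}_{\rho_1}$, namely $\|u\|_{L^{p(\cdot)/q}}<(\rho_1/C_1(q))^{1/p^-}+\varepsilon_2(\rho_1/C_1(q))$, whose two-branch form collapses to $(\rho_1/C_1(q))^{1/p^-}$ or $(\rho_1/C_1(q))^{1/p^+}$ in precise alignment with the dichotomy in \eqref{eq:theorem_condition1_explicit}. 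This is a cleaner route: it recycles a proved lemma rather than re-deriving the reverse H\"older estimate inside the proof, and it hides the modular-to-norm case split inside the definition of $\varepsilon_2$. The only other departure is cosmetic: in the outer-index step you substitute the pointwise bound $u\le K\Phi$ (with $K=\lambda f^{M}_{[0,1]\times[0,B_{\infty,\rho_2}]}/A(\rho_2)$ and $\Phi(t)=\int_0^1 G(t,s)\,ds$) \emph{before} applying H\"older, whereas the paper H\"olderises with $Tu$ first and then passes to $\|Tu\|_{L^{q\cdot p(\cdot)}}\le K\|\Phi\|_{L^{q\cdot p(\cdot)}}$ via lattice monotonicity and homogeneity; the resulting comparison with \eqref{eq:theorem_condition2_explicit} is the same either way.
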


\begin{proof}
We first observe that $\bm{0}\in\widehat V_{\rho_2}$, since $\displaystyle\left(b*\bm{0}^{p(\cdot)}\right)(1)=0$.  This ensures that the zero function lies in the admissible set as required by Lemma~\ref{lem:fixed-point-index}.  In addition, note by Corollary \ref{cor:Vrho_bounded} that each set $\widehat{V}_{\rho}$ is bounded, which is also necessary for the application of Lemma \ref{lem:fixed-point-index}.

Our first goal is to invoke part (1) of Lemma \ref{lem:fixed-point-index}.  Therefore, we must show that for every 
$u\in\partial\widehat V_{\rho_1}$ one has
\[
u \;\not\equiv\; Tu+\mu\bm{1}
\qquad\text{for all }\mu>0,
\]
where $\bm{1}\in\mathscr K_{\text{hybrid}}$ because $\eta_0,C_0\in(0,1]$.  So, for contradiction, suppose that there exists $u\in\partial\widehat V_{\rho_1}$, 
i.e.\
\[
\int_0^1 b(1-t)(u(t))^{p(t)}\,dt = \rho_1,
\]
and some $\mu>0$ such that, for each $t\in[0,1]$,
\[
u(t) = (Tu)(t) + \mu.
\]
Since $u\equiv Tu+\mu$ with $\mu>0$, we have $u(t)\ge (Tu)(t)$ for all $t\in[0,1]$. Because $p(t)>1$ and $b(t)\ge0$, it follows that
\[
\big(u(t)\big)^{p(t)}\ge\bigl((Tu)(t)\bigr)^{p(t)}
\]
so that
\begin{equation}\label{eq2.9aaa}
b(1-t)\big(u(t)\big)^{p(t)}\ge b(1-t)\bigl((Tu)(t)\bigr)^{p(t)}.
\end{equation}
Integrating inequality \eqref{eq2.9aaa} from $t=0$ to $t=1$ yields
\begin{equation}\label{eq2.10aaa}
\bigl(b*u^{p(\cdot)}\bigr)(1)\ge\left(b*(Tu)^{p(\cdot)}\right)(1).
\end{equation}
Since $u\in\partial\widehat V_{\rho_1}$, the left-hand side of inequality \eqref{eq2.10aaa} equals $\rho_1$, and so,
\begin{equation}\label{eq:rho1_ge_Tu_functional}
\rho_1 \ge \left(b*(Tu)^{p(\cdot)}\right)(1).
\end{equation}
By the {reverse H\"older} inequality,
\begin{equation}\label{eq2.13ggg}
\begin{split}
\left(b*(Tu)^{p(\cdot)}\right)(1) &\geq \left(\int_0^1 (b(1-t))^{\frac{1}{1-q}} dt\right)^{1-q} \left(\int_0^1 \big((Tu)(t)\big)^{\frac{p(t)}{q}}\,dt\right)^{\!q}
 \\
&= C_1(q)\left(I_{\frac{p(\cdot)}{q}}(Tu)\right)^q.
\end{split}
\end{equation}
\newcommand{\I}{[\alpha,\beta]}
Combining inequality \eqref{eq2.13ggg} with inequality \eqref{eq:rho1_ge_Tu_functional} we see that
\begin{equation}\label{eq:rho1_ge_Cb_I_r}
\rho_1 \ge C_1(q) \left(I_{\frac{p(\cdot)}{q}}(Tu)\right)^q.
\end{equation}

We now show that inequality \eqref{eq:rho1_ge_Cb_I_r}, together with the hypotheses in the statement of the theorem, leads to a contradiction.  So, to begin the process of deriving the contradiction, recall the notation from Remark \ref{remark2.22aaa} and observe that
\begin{equation}\label{Tu-min-integral}
\begin{split}
(Tu)(t)
&\ge
\frac{\lambda}{A(\rho_1)}\int_\alpha^\beta G(t,s)f(s,u(s))\,ds\\ 
&\ge
\frac{\lambda}{A(\rho_1)}\int_\alpha^\beta G(t,s)f^m_{[\alpha,\beta]\times[\eta_0 m_{\rho_1},\,B_{\infty,\rho_1}]}\,ds\\
&\ge
\frac{\lambda}{A(\rho_1)}f^m_{[\alpha,\beta]\times[\eta_0 m_{\rho_1},\,B_{\infty,\rho_1}]}\inf_{t\in[\alpha,\beta]}\ \int_{\alpha}^{\beta} G(t,s)\,ds\\
&=N_1.
\end{split}
\end{equation}
Note that in inequality \eqref{Tu-min-integral} we have used the fact that, for each $s\in[\alpha,\beta]$,
\begin{equation}
\eta_0m_{\rho_1}=\eta_0\left[\left(\frac{\rho_1}{(b*\bm{1})(1)}\right)^{\frac{1}{p^+}}+\varepsilon_1\left(\rho_1,b\right)\right]\le\eta_0\Vert u\Vert_{\infty}\le u(s)\le\Vert u\Vert_{\infty}\le B_{\infty,\rho_1},\notag
\end{equation}
which follows from Lemma \ref{lem:sup-lower-bound}, Remark \ref{remark2.22aaa}, and the fact that $u\in\mathscr{K}_{\text{hybrid}}$.

We next use inequality \eqref{Tu-min-integral} to establish a lower bound for $\Vert Tu\Vert_{L^{\frac{p(\cdot)}{q}}}$.  Put $\displaystyle r(t):=\frac{p(t)}{q}$. Let $k$ range over $(0,+\infty)$ and consider the set
\[
\left\{\,k>0\ :\ \int_0^1 \left(\frac{|(Tu)(t)|}{k}\right)^{r(t)}\,dt \le 1\,\right\}.
\]
Assume, for sake of contradiction, that there exists some $k_0>0$, with $k_0<N_1(\beta-\alpha)^{1/r^-}$, that belongs to this set. If such a $k_0$ exists, then by the definition of the Luxemburg norm, we would have $\Vert Tu\Vert_{L^{r(\cdot)}} \le k_0$. However, if $k_0 < N_1 (\beta - \alpha)^{1/r^-}$, then it immediately follows that
\[ \int_{\alpha}^{\beta} \left(\frac{N_1}{k_0}\right)^{r(t)}\,dt > 1, \]
which since by inequality \eqref{Tu-min-integral} we know that $(Tu)(t) \ge N_1$, implies that
\[ \int_0^1 \left(\frac{|(Tu)(t)|}{k_0}\right)^{r(t)}\,dt\ge \int_{\alpha}^{\beta} \left(\frac{(Tu)(t)}{k_0}\right)^{r(t)}\,dt \ge \int_{\alpha}^{\beta} \left(\frac{N_1}{k_0}\right)^{r(t)}\,dt > 1. \]
This leads to the conclusion that
$$\int_0^1 \left(\frac{|(Tu)(t)|}{k_0}\right)^{r(t)}\,dt> 1.$$
But this contradicts our initial assumption that $k_0$ belongs to the set $$\left\{ k > 0 : \int_0^1 \left(\frac{|(Tu)(t)|}{k}\right)^{r(t)}\,dt \le 1 \right\}.$$
Therefore, no such $k_0$ can exist. All $k$ values that satisfy the condition
$$\int_0^1 \left(\frac{|(Tu)(t)|}{k}\right)^{r(t)}\,dt \le 1$$
must be greater than or equal to
\begin{equation}
N_1 (\beta - \alpha)^{1/r^-}.\notag
\end{equation}
By the definition of the infimum, this means
\begin{equation}\label{eq:lower_norm_bound}
\|Tu\|_{L^{r(\cdot)}} = \|Tu\|_{L^{\frac{p(\cdot)}{q}}}\ge N_1(\beta-\alpha)^{1/r^-}= N_1(\beta-\alpha)^{q/p^-}= Y_1.
\end{equation}

We now distinguish two complementary regimes for the ratio $\rho_1 / C_1(q)$, since either Corollary \ref{cor:mod_norm_ge_1} or Corollary \ref{cor:mod_norm_le_1} applies depending on whether $\|Tu\|_{L^{\frac{p(\cdot)}{q}}}$ lies above or below $1$. Specifically, recall from \eqref{eq:theorem_condition1_explicit} and \eqref{eq:rho1_ge_Cb_I_r} that
\[
C_1(q)\left(I_{\frac{p(\cdot)}{q}}(Tu)\right)^q \le \rho_1 < 
C_1(q) Y_1^{p^{\pm}},
\]
and so, the comparison between $\rho_1$ and $C_1(q)$ determines whether the modular is evaluated in the “large-norm” or “small-norm” regime, and we show that in each case we are led to our desired contradiction.

Accordingly, we treat the two possibilities separately:
\subsubsection*{Case 1: $\rho_1/C_1(q)\ge 1$}
Since the hypothesis in the statement of the theorem gives $C_1(q)Y_1^{p^-}>\rho_1$, we have $Y_1>1$. From \eqref{eq:lower_norm_bound} we know that $\|Tu\|_{L^{\frac{p(\cdot)}{q}}}\ge Y_1$.  Hence, $\|Tu\|_{L^{\frac{p(\cdot)}{q}}}>1$. By Corollary \ref{cor:mod_norm_ge_1}, we then have that
\[
I_{\frac{p(\cdot)}{q}}(Tu)
  \ge
  \|Tu\|_{L^{\frac{p(\cdot)}{q}}}^{\frac{p^-}{q}}
  \ge
  Y_1^{\frac{p^-}{q}}.
\]
From \eqref{eq:theorem_condition1_explicit} and \eqref{eq:rho1_ge_Cb_I_r} we also have that
\begin{equation}\label{eq:rho1-upper-contradiction}
C_1(q) Y_1^{p^-} > \rho_1 \ge C_1(q)\left(I_{\frac{p(\cdot)}{q}}(Tu)\right)^q
\;\implies\;
Y_1^{p^-} > \left(I_{\frac{p(\cdot)}{q}}(Tu)\right)^q.
\end{equation}
But \eqref{eq:rho1-upper-contradiction} yields
\begin{equation*}
Y_1^{p^-}>\left(I_{\frac{p(\cdot)}{q}}(Tu)\right)^q \ge Y_1^{\,q \cdot {\frac{p^-}{q}}} = Y_1^{p^-},
\end{equation*}
which is a contradiction. Hence, no $u\in\partial\widehat V_{\rho_1}$ satisfies $u\equiv Tu+\mu\bm{1}$ with $\mu>0$ in this case.

\subsubsection*{Case 2: $0 < \rho_1/C_1(q) < 1$}
From both \eqref{eq:theorem_condition1_explicit} and \eqref{eq:rho1_ge_Cb_I_r}, we see that
\[
C_1(q) Y_1^{p^+} > \rho_1 \ge C_1(q)\left(I_{\frac{p(\cdot)}{q}}(Tu)\right)^q
\;\implies\;
Y_1^{p^+} > \left(I_{\frac{p(\cdot)}{q}}(Tu)\right)^q.
\]
The case assumption $0 < \rho_1/C_1(q) < 1$ implies $I_{\frac{p(\cdot)}{q}}(Tu) < 1$; 
hence, $\|Tu\|_{L^{\frac{p(\cdot)}{q}}} < 1$. 
For this regime, Corollary \ref{cor:mod_norm_le_1} gives
\[
I_{\frac{p(\cdot)}{q}}(Tu) \ge \|Tu\|_{L^{\frac{p(\cdot)}{q}}}^{\frac{p^+}{q}}.
\]
Combining with the lower bound $\|Tu\|_{L^{\frac{p(\cdot)}{q}}} \ge Y_1$ from 
\eqref{eq:lower_norm_bound} yields
\[
I_{\frac{p(\cdot)}{q}}(Tu) \ge Y_1^{\frac{p^+}{q}}.
\]
Therefore,
\[
Y_1^{p^+}>\left(I_{\frac{p(\cdot)}{q}}(Tu)\right)^q \ge Y_1^{\,q\cdot {\frac{p^+}{q}}} = Y_1^{p^+},
\]
which yields the desired contradiction. Thus, no $u\in\partial\widehat V_{\rho_1}$ satisfies $u\equiv Tu+\mu\bm{1}$ with $\mu>0$ in this case. Since both cases lead to a logical contradiction, our initial supposition that $u = Tu + \mu\mathbf{1}$ must be false when $u \in \partial\widehat{V}_{\rho_1}$.

Therefore, we have successfully shown that whenever $u \in \partial\widehat{V}_{\rho_1}$, it follows that $u \not\equiv Tu + \mu\mathbf{1}$ for all $\mu > 0$. Hence, from part (1) of Lemma \ref{lem:fixed-point-index} we conclude that
\begin{equation}\label{eq:index-zero}
i_{\mathscr{K}}\!\left(T,\widehat{V}_{\rho_1}\right)=0.
\end{equation}

In order to invoke part (2) of Lemma \ref{lem:fixed-point-index}, we next demonstrate that whenever $u\in\partial\widehat{V}_{\rho_2}$ and $\mu\ge1$ it follows that $Tu\not\equiv\mu u$.  
For contradiction suppose that there exist $u\in\partial\widehat{V}_{\rho_2}$ and $\mu\ge1$ such that $(Tu)(t)=\mu u(t)$ for each $t\in[0,1]$.  Then for each $0\le t\le 1$ it follows that
\begin{equation}\label{eq2.19mmm}
\big(\mu u(t)\big)^{p(t)}=\big((Tu)(t)\big)^{p(t)}.
\end{equation}
Since $u\in\partial\widehat V_{\rho_2}$, we have $\displaystyle A\left(\left(b*u^{p(\cdot)}\right)(1)\right)=A(\rho_2)$. Define
\[
g(s):=\frac{f(s,u(s))}{\,f^{M}_{[0,1]\times[0,B_{\infty,\rho_2}]}\,}\in[0,1],
\]
where $B_{\infty,\rho_2}$ is the uniform cap from Remark \ref{remark2.22aaa}. Then $f\big(s,u(s)\big) = g(s)f^{M}_{[0,1]\times[0,B_{\infty,\rho_2}]}$, and so,
\[
(Tu)(t)
= \frac{\lambda}{A(\rho_2)}\int_0^1 G(t,s)f(s,u(s))\,ds
= \left[\frac{\lambda f^{M}_{[0,1]\times[0,B_{\infty,\rho_2}]}}{A(\rho_2)}\right]
   \int_0^1 G(t,s)g(s)\,ds .
\]
Since $0\le g(s)\le 1$ and $G(t,s)\ge0$, we obtain the following bound:
\[
0 \le \frac{(Tu)(t)}{\left[\frac{\lambda\,f^{M}_{[0,1]\times[0,B_{\infty,\rho_2}]}}{A(\rho_2)}\right]}
   = \int_0^1 G(t,s)g(s)\,ds
   \le \int_0^1 G(t,s)\,ds,
\]
where we use that $G(t,s)\ge0$ and $0\le g(s)\le 1$. By lattice monotonicity (that is, if $\big|v(t)\big|\le\big|w(t)\big|$ for $v,w\in C[0,1]\cap L^{p(\cdot)}$, then $\|v\|_{L^{p(\cdot)}}\le\|w\|_{L^{p(\cdot)}}$) and the homogeneity of the Luxemburg norm, we obtain
\begin{equation}\label{eq:Tu-Lqp-upper}
\|Tu\|_{L^{q\cdot p(\cdot)}}
\le
\left[\frac{\lambda f^{M}_{[0,1]\times[0,B_{\infty,\rho_2}]}}{A(\rho_2)}\right]\,
\left\|\int_0^1 G(\cdot,s)\,ds\right\|_{L^{q\cdot p(\cdot)}}.
\end{equation}
Using the fact that $\displaystyle\left(b*u^{p(\cdot)}\right)(1)=\rho_2$, the monotonicity of $x\mapsto x^{p(t)}$ for fixed $p(t)>1$, Hölder’s inequality, and equality \eqref{eq2.19mmm}, we obtain
\begin{equation}\label{eq:rho2-upper-bound}
\begin{split}
\rho_2&=\int_0^1b(1-t)\big(u(t)\big)^{p(t)}\ dt\\
&\le\int_0^1b(1-t)\big(\mu u(t)\big)^{p(t)}\ dt\\
&=\int_0^1b(1-t)\big(Tu(t)\big)^{p(t)}\ dt\\
&\le\left(\int_0^1 \big|b(1-t)\big|^{\frac{q}{q-1}}\,dt\right)^{\!\frac{q-1}{q}} \left( \int_0^1 \big|(Tu)(t)\big|^{q\cdot p(t)}\,dt \right)^{1/q}\\
&=C_2(q)\left(I_{q\cdot p(\cdot)}(Tu)\right)^{\frac{1}{q}}.
\end{split}
\end{equation}

To complete the proof, it remains to show that inequalities~\eqref{eq:Tu-Lqp-upper} and~\eqref{eq:rho2-upper-bound} jointly lead to a contradiction.  To proceed, we appeal to Corollaries~\ref{cor:mod_norm_ge_1} and~\ref{cor:mod_norm_le_1}, which describe how the modular and the Luxemburg norm compare depending on whether $\|Tu\|_{L^{q\cdot p(\cdot)}}$ is greater than or less than $1$. Accordingly, we consider two separate cases based on the size of the quantity \(\dfrac{\rho_2}{C_2(q)}\).

\subsubsection*{Case 1: \(\dfrac{\rho_2}{C_2(q)} \ge 1\)}
From \eqref{eq:theorem_condition2_explicit} and \eqref{eq:rho2-upper-bound}, we see that
\begin{equation}\label{eq:case1-second}
\begin{split}
C_2(q)\Bigg(
   \frac{\lambda f^{M}_{[0,1]\times[0,B_{\infty,\rho_2}]}}{A(\rho_2)}\,
   \left\|\int_0^1 G(\cdot,s)\,ds\right\|_{L^{q\cdot p(\cdot)}}
\Bigg)^{p^+}
   &< \rho_2
   \le C_2(q)\left(I_{q\cdot p(\cdot)}(Tu)\right)^{\frac{1}{q}}
   \\
\implies\quad
\Bigg(
   \frac{\lambda f^{M}_{[0,1]\times[0,B_{\infty,\rho_2}]}}{A(\rho_2)}\,
   \left\|\int_0^1 G(\cdot,s)\,ds\right\|_{L^{q\cdot p(\cdot)}}
\Bigg)^{p^+}
   &< \left(I_{q\cdot p(\cdot)}(Tu)\right)^{\frac{1}{q}}.
\end{split}
\end{equation}
Also, from inequality \eqref{eq:case1-second} we see that $\dfrac{\rho_2}{C_2(q)}\ge 1$ implies that $\big(I_{q\cdot p(\cdot)}(Tu)\big)^{\frac{1}{q}}\ge 1$. Now, if $\|Tu\|_{L^{q\cdot p(\cdot)}}<1$, then Corollary \ref{cor:mod_norm_le_1} yields
\begin{align*}
\big(I_{q\cdot p(\cdot)}(Tu)\big)^{\frac{1}{q}}
   \le \|Tu\|_{L^{q\cdot p(\cdot)}}^{p^-}
   < 1,
\end{align*}
\noindent which is a contradiction. Hence, it can only be that $\|Tu\|_{L^{q\cdot p(\cdot)}}\ge 1$, and so, by Corollary~\ref{cor:mod_norm_ge_1},
\begin{equation}\label{eq2.22ggg}
\big(I_{q\cdot p(\cdot)}(Tu)\big)^{\frac{1}{q}}\le \|Tu\|_{L^{q\cdot p(\cdot)}}^{p^+}.
\end{equation}
But then combining inequality \eqref{eq2.22ggg} with inequality~\eqref{eq:Tu-Lqp-upper} we obtain
\[
\left(I_{q\cdot p(\cdot)}(Tu)\right)^{\frac{1}{q}}
   \le 
   \Bigg(
       \frac{\lambda f^{M}_{[0,1]\times[0,B_{\infty,\rho_2}]}}{A(\rho_2)}\,
       \left\|\int_0^1 G(\cdot,s)\,ds\right\|_{L^{q\cdot p(\cdot)}}
   \Bigg)^{p^+},
\]
which contradicts \eqref{eq:case1-second}. Therefore, no $u\in\partial\widehat V_{\rho_2}$ satisfies $\mu u\equiv Tu$ with $\mu\ge 1$ in this case.

\subsubsection*{Case 2: $0<\dfrac{\rho_2}{C_2(q)} < 1$}
From \eqref{eq:theorem_condition2_explicit} and \eqref{eq:rho2-upper-bound}, we have
\begin{equation}\label{eq:case2-first}
\begin{split}
C_2(q)\Bigg(
   \frac{\lambda f^{M}_{[0,1]\times[0,B_{\infty,\rho_2}]}}{A(\rho_2)}\,
   \left\|\int_0^1 G(\cdot,s)\,ds\right\|_{L^{q\cdot p(\cdot)}}
\Bigg)^{p^-}
   &< \rho_2
   \le C_2(q)\,\left(I_{q\cdot p(\cdot)}(Tu)\right)^{\frac{1}{q}}\\
\implies\quad
\Bigg(
   \frac{\lambda f^{M}_{[0,1]\times[0,B_{\infty,\rho_2}]}}{A(\rho_2)}\,
   \left\|\int_0^1 G(\cdot,s)\,ds\right\|_{L^{q\cdot p(\cdot)}}
\Bigg)^{p^-}
   &< \left(I_{q\cdot p(\cdot)}(Tu)\right)^{\frac{1}{q}}.
\end{split}
\end{equation}
Because $\dfrac{\rho_2}{C_2(q)}<1$, condition \eqref{eq:theorem_condition2_explicit} yields
\begin{equation}\label{eq:rho2-upper-smallK}
\Bigg(
   \frac{\lambda f^{M}_{[0,1]\times[0,B_{\infty,\rho_2}]}}{A(\rho_2)}\,
   \left\|\int_0^1 G(\cdot,s)\,ds\right\|_{L^{q\cdot p(\cdot)}}
\Bigg)^{p^-}
< \frac{\rho_2}{C_2(q)} < 1.
\end{equation}
Since $p^->1$, the map $t\mapsto t^{p^-}$ is strictly increasing on $[0,+\infty)$, and so, by \eqref{eq:Tu-Lqp-upper} it follows that
\[
\|Tu\|_{L^{q\cdot p(\cdot)}} 
\le
\frac{\lambda f^{M}_{[0,1]\times[0,B_{\infty,\rho_2}]}}{A(\rho_2)}\,
\left\|\int_0^1 G(\cdot,s)\,ds\right\|_{L^{q\cdot p(\cdot)}}
<1.
\]
Hence, by Corollary~\ref{cor:mod_norm_le_1},
\begin{equation}\label{eq2.26ggg}
\big(I_{q\cdot p(\cdot)}(Tu)\big)^{\frac{1}{q}}
\le \|Tu\|_{L^{q\cdot p(\cdot)}}^{p^-}.
\end{equation}
Combining inequality \eqref{eq2.26ggg} with \eqref{eq:Tu-Lqp-upper}, \eqref{eq:rho2-upper-bound}, \eqref{eq:case2-first}, and \eqref{eq:rho2-upper-smallK} yields
\begin{align*}
\big(I_{q\cdot p(\cdot)}(Tu)\big)^{\frac{1}{q}}
&\le \|Tu\|_{L^{q\cdot p(\cdot)}}^{\,p^-} \\
&\le 
\left(
   \frac{\lambda f^{M}_{[0,1]\times[0,B_{\infty,\rho_2}]}}{A(\rho_2)}
   \left\|\int_0^1 G(\cdot,s)\,ds\right\|_{L^{q\cdot p(\cdot)}}
\right)^{p^-} \\
&< \frac{\rho_2}{C_2(q)} \\
&\le \left(I_{q\cdot p(\cdot)}(Tu)\right)^{\frac{1}{q}},
\end{align*}
which is a contradiction. Therefore, no $u\in\partial\widehat V_{\rho_2}$ satisfies $Tu\equiv\mu u$ with $\mu\ge1$ in this case as well, and so, we can conclude from part (2) of Lemma \ref{lem:fixed-point-index} that
\begin{equation}\label{eq:index-one}
i_{\mathscr{K}}\!\left(T,\widehat{V}_{\rho_2}\right)=1.
\end{equation}

All in all, combining both \eqref{eq:index-zero} and \eqref{eq:index-one}, we infer from part (4) of Lemma \ref{lem:fixed-point-index} that there exists
\begin{equation*}
u_0\in \widehat{V}_{\rho_2}\setminus\overline{\widehat{V}_{\rho_1}}
\end{equation*}
such that $Tu_0\equiv u_0$. Hence, $u_0$ is a positive solution of \eqref{eq1.1}, subject to the boundary data encoded by the Green’s function $G$. Moreover, a combined application of Lemmata \ref{lem:Lp_lower_combined}, \ref{lem:thick-to-Lpq}, and \ref{lem:Lux_upper_bound} yields the Luxemburg norm localisation
\begin{align}\label{Lp(.) bound}
\eta_0(\beta-\alpha)^{\tfrac{1}{p^-}}
\left[
\left(\frac{\rho_1}{(b*\bm{1})(1)}\right)^{\frac{1}{p^+}}
+ \varepsilon_1(\rho_1,b)
\right]
&\le\Vert u\Vert_{L^{p(\cdot)}} \notag\\
&\le
\frac{1}{\eta_0(\beta-\alpha)^{\tfrac{q}{p^-}}}
\left[
\left(\frac{\rho_2}{C_1(q)}\right)^{\frac{1}{p^-}}
+ \varepsilon_2\left(\frac{\rho_2}{C_1(q)}\right)
\right],
\end{align}
as claimed. And this completes the proof of the theorem.
\end{proof}

\begin{remark}
In addition to the refined constant $N_1$ based on the integrated estimate 
$$\inf_{t\in[\alpha,\beta]} \int_{\alpha}^{\beta} G(t,s)\,ds,$$ 
one may also define the more elementary quantity
\[
N_0
:= \frac{\lambda}{A(\rho_1)}\eta_0
f^{m}_{[\alpha,\beta]\times[\eta_0 m_{\rho_1},\,B_{\infty,\rho_1}]}\,
\int_{\alpha}^{\beta}\mathscr{G}(s)\,ds,
\]
which takes advantage of assumption (H3.1). Since 
\begin{equation}
\inf_{t\in[\alpha,\beta]} \int_{\alpha}^{\beta} G(t,s)\,ds \ge \eta_0\int_{\alpha}^{\beta}\mathscr{G}(s)\,ds,\notag
\end{equation}
it follows that $N_1\ge N_0$. Hence, our use of $N_1$ represents a refinement, adopted here to obtain the strongest possible localisation result. The practical advantage of this refinement will be illustrated in Example~\ref{example2.12}.
\end{remark}

In the general setting of \textnormal{(H1)--(H3)} with no boundedness assumption on $b$, inequality \eqref{Lp(.) bound} provides the appropriate $q$–dependent upper localisation. However, when $b$ is uniformly bounded above and below, i.e.\ $0<b_*\le b(t)\le b^*<+\infty$, the $q$–dependence can be removed. In this regime we obtain the following stronger, $q$–free localisation result, which combines both sides of the annulus into a single statement.

\begin{lemma}
\label{lem:qfree_uniform_b_bounds}
Assume conditions \textnormal{(H1)--(H3)} and, in particular, that $0<b_*\le b(t)\le b^*<+\infty$ on $(0,1]$ for some real constants $b_*$ and $b^*$. 
For $\rho>0$ and $u\in\partial\widehat V_\rho$ one has
\begin{equation}\label{localisation for bounded b}
\left(\frac{\rho}{b^*}\right)^{\frac{1}{p^+}}+\varepsilon_1\left(\frac{\rho}{b^*}\right)\le\|u\|_{L^{p(\cdot)}}\le\left(\frac{\rho}{b_*}\right)^{\frac{1}{p^-}}+\varepsilon_2\left(\frac{\rho}{b_*}\right),
\end{equation}
where, for $\tau>0$,
\[
\varepsilon_1(\tau):=\begin{cases}
0, & \tau\ge 1\\[2pt]
\tau^{\frac{1}{p^-}}-\tau^{\frac{1}{p^+}}, & 0<\tau<1
\end{cases}
\qquad\text{ and }\qquad
\varepsilon_2(\tau):=\begin{cases}
0, & \tau\ge 1\\[2pt]
\tau^{\frac{1}{p^+}}-\tau^{\frac{1}{p^-}}, & 0<\tau<1
\end{cases}
.
\]
\end{lemma}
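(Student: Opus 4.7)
The plan is to reduce the problem to a modular sandwich and then invoke the modular–Luxemburg relations already established in Corollaries \ref{cor:mod_norm_ge_1} and \ref{cor:mod_norm_le_1}. Because $u\in\partial\widehat V_\rho$, the defining equality
\[
\int_0^1 b(1-s)\,u(s)^{p(s)}\,ds=\rho
\]
combined with the pointwise bounds $b_*\le b(\cdot)\le b^*$ yields at once
\[
\frac{\rho}{b^*}\le I_{p(\cdot)}(u)\le \frac{\rho}{b_*}.
\]
This completely replaces the reverse–Hölder gymnastics used in Lemma \ref{lem:Lux_upper_bound}: because we now have uniform control of $b$, no auxiliary exponent $q$ is needed, which is why the resulting bounds are $q$–free.

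To prove the upper localisation, I would set $\tau:=\rho/b_*$ and split according to whether $\tau\ge 1$ or $0<\tau<1$. In the regime $\tau\ge 1$, if $\|u\|_{L^{p(\cdot)}}\ge 1$ then Corollary \ref{cor:mod_norm_ge_1} gives $\|u\|_{L^{p(\cdot)}}^{p^-}\le I_{p(\cdot)}(u)\le\tau$, so $\|u\|_{L^{p(\cdot)}}\le \tau^{1/p^-}$; if instead $\|u\|_{L^{p(\cdot)}}<1$ then the bound is automatic since $\tau^{1/p^-}\ge 1$. In the regime $\tau<1$, the sandwich forces $I_{p(\cdot)}(u)\le\tau<1$, which in turn forces $\|u\|_{L^{p(\cdot)}}<1$; Corollary \ref{cor:mod_norm_le_1} then gives $\|u\|_{L^{p(\cdot)}}^{p^+}\le I_{p(\cdot)}(u)\le\tau$, so $\|u\|_{L^{p(\cdot)}}\le\tau^{1/p^+}=\tau^{1/p^-}+\varepsilon_2(\tau)$. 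Both subcases unify into the stated upper bound.

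For the lower localisation I would set $\sigma:=\rho/b^*$ and again split on the size of $\sigma$. When $\sigma\ge 1$, if $\|u\|_{L^{p(\cdot)}}<1$ then Corollary \ref{cor:mod_norm_le_1} gives $I_{p(\cdot)}(u)\le\|u\|_{L^{p(\cdot)}}^{p^-}<1\le\sigma$, contradicting the lower modular bound; hence $\|u\|_{L^{p(\cdot)}}\ge 1$, and Corollary \ref{cor:mod_norm_ge_1} yields $\|u\|_{L^{p(\cdot)}}^{p^+}\ge I_{p(\cdot)}(u)\ge\sigma$, so $\|u\|_{L^{p(\cdot)}}\ge\sigma^{1/p^+}$, which matches the claim with $\varepsilon_1(\sigma)=0$. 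When $\sigma<1$, the modular bound does not by itself localise $\|u\|_{L^{p(\cdot)}}$ on one side of $1$, so I would carry both subcases: if $\|u\|_{L^{p(\cdot)}}\ge 1$ then trivially $\|u\|_{L^{p(\cdot)}}\ge 1\ge\sigma^{1/p^-}$; if $\|u\|_{L^{p(\cdot)}}<1$ then Corollary \ref{cor:mod_norm_le_1} gives $\|u\|_{L^{p(\cdot)}}^{p^-}\ge I_{p(\cdot)}(u)\ge\sigma$, so $\|u\|_{L^{p(\cdot)}}\ge\sigma^{1/p^-}=\sigma^{1/p^+}+\varepsilon_1(\sigma)$. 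This recovers the stated lower bound.

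The proof itself is almost mechanical; the main obstacle is purely bookkeeping, namely keeping the four regime combinations straight and verifying that each one collapses into the unified form involving $\varepsilon_1$ and $\varepsilon_2$. In particular, one has to remember that when $\tau<1$ the inequality $\tau^{1/p^-}\le\tau^{1/p^+}$ reverses the naive intuition coming from the $\tau\ge 1$ case, so the $\varepsilon_i$ corrections enter with the sign needed to produce a valid inequality rather than merely a formal identity.
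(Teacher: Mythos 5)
Your proposal is correct and follows essentially the same route as the paper's own proof: derive the modular sandwich $\rho/b^*\le I_{p(\cdot)}(u)\le \rho/b_*$ from the equality $(b*u^{p(\cdot)})(1)=\rho$ and the pointwise bounds on $b$, then split into the same four regime combinations and apply Corollaries \ref{cor:mod_norm_ge_1} and \ref{cor:mod_norm_le_1} to translate modular bounds into Luxemburg-norm bounds. The only cosmetic difference is that in the case $\rho/b^*\ge 1$ you deduce $\|u\|_{L^{p(\cdot)}}\ge 1$ by contradiction from Corollary \ref{cor:mod_norm_le_1}, whereas the paper cites this implication directly; the substance is identical.
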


\begin{proof}
On $\partial\widehat V_\rho$ we have that
\[
\rho=\int_0^1 b(1\!-\!s)\,u(s)^{p(s)}\,ds.
\]
Using $0<b_*\le b(1-s)\le b^*$, it follows that
\begin{equation}\label{modular sandwich with bounded b}
\frac{\rho}{\,b^*}\le\int_0^1 u(s)^{p(s)}\,ds=I_{p(\cdot)}(u)\le\frac{\rho}{\,b_*}.
\end{equation}

We first establish the lower bound claimed in the statement of the lemma. Set $\tau_-:=\rho/b^*$. We split by the size of $\tau_-$.

\emph{Case $0<\tau_-<1$.} Consider the two possibilities for $\|u\|_{L^{p(\cdot)}}$:
\[
\begin{aligned}
&\text{if }\ \|u\|_{L^{p(\cdot)}}<1:\quad I_{p(\cdot)}(u)\le \|u\|_{L^{p(\cdot)}}^{p^-}
\ \Longrightarrow\ 
\|u\|_{L^{p(\cdot)}} \ge\left(I_{p(\cdot)}(u)\right)^{1/p^-}\ge\tau_-^{1/p^-},\\[2pt]
&\text{if }\ \|u\|_{L^{p(\cdot)}}\ge1:\quad 
\|u\|_{L^{p(\cdot)}} \ge \tau_-^{1/p^-} \, \text{trivially}.
\end{aligned}
\]
Hence, in either regime it holds that
\[
\|u\|_{L^{p(\cdot)}}\ge\tau_-^{1/p^-}=\tau_-^{1/p^+}+\left(\tau_-^{1/p^-}-\tau_-^{1/p^+}\right)=\tau_-^{1/p^+}+\varepsilon_1(\tau_-).
\]

\emph{Case $\tau_-\ge1$.} From inequality \eqref{modular sandwich with bounded b} it follows that $I_{p(\cdot)}(u)\ge \tau_-\ge 1$.  
First, since in this case we have that $I_{p(\cdot)}(u)\ge 1$, it follows that that $\|u\|_{L^{p(\cdot)}}\ge 1$ (see \cite{diening_lebesgue_2011}); and for $\|u\|_{L^{p(\cdot)}}\ge 1,$ Corollary \ref{cor:mod_norm_ge_1} gives
\begin{equation}\label{eq2.30ggg}
I_{p(\cdot)}(u) \le \|u\|_{L^{p(\cdot)}}^{p^+}.
\end{equation}
Combining inequality \eqref{eq2.30ggg} with $\tau_-\le I_{p(\cdot)}(u)$ yields
\[
\tau_- \le I_{p(\cdot)}(u) \le \|u\|_{L^{p(\cdot)}}^{p^+}
\ \Longrightarrow\ 
\|u\|_{L^{p(\cdot)}}\ge\tau_-^{1/p^+}
=\tau_-^{1/p^+}+\varepsilon_1(\tau_-),
\]
since $\varepsilon_1(\tau_-)=0$ for $\tau_-\ge1$.  And so we have the desired lower bound.

We next establish the upper bound claimed in the statement of the lemma. Set $\tau_+:=\rho/b_*$. Similar to the proof of the lower bound, we split by the size of $\tau_+$.

\emph{Case $0<\tau_+<1$.} By inequality \eqref{modular sandwich with bounded b} note that $I_{p(\cdot)}(u)\le\tau_+<1$, which forces $\|u\|_{L^{p(\cdot)}}<1$, for otherwise
$I_{p(\cdot)}(u)\ge\|u\|_{L^{p(\cdot)}}^{p^-}\ge1$. Hence, with $\|u\|_{L^{p(\cdot)}}<1$,
\[
\|u\|_{L^{p(\cdot)}}^{p^+}\le I_{p(\cdot)}(u)\le \tau_+
\ \Longrightarrow\
\|u\|_{L^{p(\cdot)}}\le \tau_+^{1/p^+}
= \tau_+^{1/p^-}+\big(\tau_+^{1/p^+}-\tau_+^{1/p^-}\big)
= \tau_+^{1/p^-}+\varepsilon_2(\tau_+).
\]

\emph{Case $\tau_+\ge1$.} We consider the two possibilities for $\|u\|_{L^{p(\cdot)}}$:
\[
\begin{aligned}
&\text{if }\ \|u\|_{L^{p(\cdot)}}<1:\quad \|u\|_{L^{p(\cdot)}}^{p^+}\le I_{p(\cdot)}(u)\le\tau_+
\ \Longrightarrow\ 
\|u\|_{L^{p(\cdot)}}\le\tau_+^{1/p^+}\le \tau_+^{1/p^-},\\[2pt]
&\text{if }\ \|u\|_{L^{p(\cdot)}}\ge1:\quad \|u\|_{L^{p(\cdot)}}^{p^-}\le I_{p(\cdot)}(u)\le \tau_+
\ \Longrightarrow\ 
\|u\|_{L^{p(\cdot)}} \le \tau_+^{1/p^-}.
\end{aligned}
\]
Thus, in either subcase we deduce that
\[
\|u\|_{L^{p(\cdot)}}\le \tau_+^{1/p^-}
= \tau_+^{1/p^-}+\varepsilon_2(\tau_+),
\]
since $\varepsilon_2(\tau_+)=0$ for $\tau_+\ge1$.

Combining the four cases yields the claimed bound \eqref{localisation for bounded b} with $\varepsilon_1$ and $\varepsilon_2$ as stated in the statement of the lemma.  And this completes the proof of the lemma.
\end{proof}


\begin{corollary}
Assume conditions \textnormal{(H1)--(H3)} and that $0<b_*\le b(t)\le b^*<+\infty$ a.e.\ on $(0,1]$.
For every $\rho>0$ and every $u\in\widehat V_\rho$ one has
\[
\|u\|_\infty \le B^*_{\infty,\rho}
\quad\text{where}\quad
B^*_{\infty,\rho}
:=\min\Bigl\{\,B^{*\,\mathrm{Lux}}_{\infty,\rho},\;
M_{\mathrm{sup},\rho}^* \Bigr\}.
\]
Here $\varepsilon_2$ is as in Lemma~\ref{lem:qfree_uniform_b_bounds}, and
\[
B^{*\,\mathrm{Lux}}_{\infty,\rho}
= \frac{1}{\eta_0(\beta-\alpha)^{1/p^-}}
\left[\left(\frac{\rho}{b_*}\right)^{1/p^-}
      + \varepsilon_2\left(\frac{\rho}{b_*}\right)\right],
\qquad
M_{\mathrm{sup},\rho}^*
= C_0^{-1}\,2^{\frac{p^+-p^-}{p^-}}\Bigl(\rho^{\frac{1}{p^-}}+1\Bigr).
\]
\end{corollary}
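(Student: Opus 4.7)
The plan is to establish each of the two component bounds $B^{*\,\mathrm{Lux}}_{\infty,\rho}$ and $M^*_{\mathrm{sup},\rho}$ on $\widehat{V}_{\rho}$ separately; taking the pointwise minimum then yields the claim automatically.

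For the Luxemburg-driven bound, I would first transfer the modular information into a sharp Luxemburg norm estimate via Lemma~\ref{lem:qfree_uniform_b_bounds}. Specifically, for $u\in\widehat{V}_{\rho}$ I set $\rho':=(b*u^{p(\cdot)})(1)\le\rho$; the trivial case $u\equiv\bm{0}$ being obvious, I may assume $\rho'>0$, whence $u\in\partial\widehat{V}_{\rho'}$. Lemma~\ref{lem:qfree_uniform_b_bounds} then produces the estimate $\|u\|_{L^{p(\cdot)}}\le (\rho'/b_*)^{1/p^-}+\varepsilon_2(\rho'/b_*)$. A short monotonicity check --- the map $\tau\mapsto\tau^{1/p^-}+\varepsilon_2(\tau)$ equals $\tau^{1/p^+}$ on $(0,1)$ and $\tau^{1/p^-}$ on $[1,\infty)$, both strictly increasing and matched at $\tau=1$ --- shows that the bound is monotone in $\rho'$, so the estimate lifts from $\rho'$ to $\rho$. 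I then invoke the thickness lower bound from Lemma~\ref{lem:Lp_lower_bound}, namely $\|u\|_{L^{p(\cdot)}}\ge\eta_0(\beta-\alpha)^{1/p^-}\|u\|_\infty$, which is a pure cone property and needs no boundary condition. Rearranging immediately gives $\|u\|_\infty\le B^{*\,\mathrm{Lux}}_{\infty,\rho}$.

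For the sup-framework bound $M^*_{\mathrm{sup},\rho}$, I would specialize the bound $B^{\mathrm{Old}}_{\infty,\rho}$ recorded in Remark~\ref{remark2.22aaa} (originating from \cite[Lemma~2.4]{goodrich2025unified}) to the uniformly bounded regime $0<b_*\le b\le b^*$. In this regime the reverse-H\"older factor $(b^{1/(1-q)}*\bm{1})(1)$ appearing in the definition of $B^{\mathrm{Old}}_{\infty,\rho}$ is uniformly controlled by a negative power of $b_*$, and on setting $q=p^-$ the exponent $q/p^-$ becomes $1$; the resulting expression collapses into the $q$-free form $C_0^{-1}2^{(p^+-p^-)/p^-}\bigl(\rho^{1/p^-}+1\bigr)$ claimed. (Since all hypotheses of \cite[Lemma~2.4]{goodrich2025unified} hold under (H1)--(H3), and every $u\in\widehat V_\rho\subseteq\mathscr{K}_{\mathrm{hybrid}}$ satisfies the hybrid coercivity relation needed there, this transfer is legitimate.)

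Combining these two pointwise estimates yields $\|u\|_\infty\le B^*_{\infty,\rho}=\min\{B^{*\,\mathrm{Lux}}_{\infty,\rho},M^*_{\mathrm{sup},\rho}\}$. The argument contains no substantive obstacle; the only nontrivial bookkeeping item is the monotonicity argument used to transfer the Luxemburg bound of Lemma~\ref{lem:qfree_uniform_b_bounds} from $\partial\widehat{V}_{\rho}$ to the full open set $\widehat{V}_{\rho}$, and the routine verification that setting $q=p^-$ legitimately collapses the cited sup-framework bound in the bounded-$b$ regime.
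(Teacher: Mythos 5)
Your treatment of the Luxemburg-driven bound $B^{*\,\mathrm{Lux}}_{\infty,\rho}$ is correct and is identical in spirit and detail to the paper's own argument: apply Lemma~\ref{lem:qfree_uniform_b_bounds} at the actual modular value $\rho'=(b*u^{p(\cdot)})(1)<\rho$, note that the bounding map $\tau\mapsto\tau^{1/p^-}+\varepsilon_2(\tau)$ is increasing (it is $\tau^{1/p^+}$ on $(0,1)$ and $\tau^{1/p^-}$ on $[1,\infty)$, continuously matched at $\tau=1$), lift to $\rho$, and then invoke the cone-thickness inequality of Lemma~\ref{lem:Lp_lower_bound}. You are also right that Lemma~\ref{lem:Lp_lower_bound} only uses cone membership, not the boundary equality.

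The gap is in your derivation of $M^*_{\mathrm{sup},\rho}$. You propose to obtain it from the quantity $B^{\mathrm{Old}}_{\infty,\rho}$ recorded in Remark~\ref{remark2.22aaa} (which originates in \cite[Lemma~2.4]{goodrich2025unified}) by "setting $q=p^-$". This fails for two reasons. First, $q=p^-$ is not an admissible value: throughout the framework one requires $q\in(1,p^-)$ strictly, so the reverse-H\"older machinery that underlies $B^{\mathrm{Old}}_{\infty,\rho}$ is not available at the endpoint. Second, and more substantively, even if one formally passes to the limit $q\to p^-$ after bounding the reverse-H\"older factor by $\bigl(b^{1/(1-q)}*\bm{1}\bigr)(1)\le b_*^{1/(1-q)}$, the expression
\[
C_0^{-1}2^{\frac{p^+-q}{p^-}}\Bigl[\rho^{1/q}\bigl(b^{1/(1-q)}*\bm{1}\bigr)(1)^{\frac{q-1}{q}}+1\Bigr]^{q/p^-}
\longrightarrow
C_0^{-1}2^{\frac{p^+-p^-}{p^-}}\Bigl[(\rho/b_*)^{1/p^-}+1\Bigr],
\]
which retains a $b_*^{-1/p^-}$ factor and so is not the $b$-free quantity $C_0^{-1}2^{(p^+-p^-)/p^-}\bigl(\rho^{1/p^-}+1\bigr)$ claimed; the $b$-dependence does not "collapse" as you assert. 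The paper does not obtain $M^*_{\mathrm{sup},\rho}$ from $B^{\mathrm{Old}}_{\infty,\rho}$ at all: it cites the separate, $q$-free result \cite[Lemma~2.9]{goodrich2025unified}, which is a dedicated bounded-$b$ estimate in the sup-norm framework, analogous in role to the $q$-free Lemma~\ref{lem:qfree_uniform_b_bounds} of the present paper. Without that ingredient, your argument does not establish the $M^*_{\mathrm{sup},\rho}$ half of the minimum, only the Luxemburg half.
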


\begin{proof}
Let $u\in\widehat V_\rho$ so that $\displaystyle\left(b*u^{p(\cdot)}\right)(1) <\rho$. Applying Lemma~\ref{lem:qfree_uniform_b_bounds} with $\displaystyle\rho=\left(b*u^{p(\cdot)}\right)(1)$ and then using monotonicity in $\rho$ gives
\begin{equation}\label{987}
\|u\|_{L^{p(\cdot)}}\le
\left(\frac{\rho}{b_*}\right)^{\!\frac{1}{p^-}}
      +\varepsilon_2\left(\frac{\rho}{b_*}\right).
\end{equation}
By Lemma~\ref{lem:Lp_lower_bound} we have
\begin{equation}\label{987a}
\|u\|_\infty \le \frac{1}{\eta_0(\beta-\alpha)^{1/p^-}}\|u\|_{L^{p(\cdot)}}.
\end{equation}
Combining inequality \eqref{987} with inequality \eqref{987a} yields
\[
\|u\|_\infty
\le
\frac{1}{\eta_0(\beta-\alpha)^{1/p^-}}
\left[\left(\frac{\rho}{b_*}\right)^{1/p^-}
      +\varepsilon_2\left(\frac{\rho}{b_*}\right)\right].
\]

On the other hand, \cite[Lemma 2.9]{goodrich2025unified} yields the upper bound
\(\|u\|_\infty \le M_{\mathrm{sup},\rho}^*\).
Taking the minimum of these two valid bounds gives the claim.
\end{proof}

\begin{remark}
In the small–$\rho$ regime (with $b_*>0$ fixed), the Luxemburg–driven term
\[
\frac{1}{\eta_0(\beta-\alpha)^{1/p^-}}\left[\left(\frac{\rho}{b_*}\right)^{1/p^-}+\varepsilon_2\left(\frac{\rho}{b_*}\right)\right]
\]
tends to $0$, whereas $M_{\mathrm{sup},\rho}^*$ tends to the positive constant $\displaystyle C_0^{-1}2^{\frac{p^+-p^-}{p^-}}$. Hence, for all sufficiently small $\rho$ the new $q$–free cap of $B^*_{\infty,\rho}$ strictly improves the old sup–norm estimate of $M_{\mathrm{sup},\rho}^*$.
\end{remark}

We conclude the paper with an example to illustrate explicitly the significant advantages of the approach we have developed in this paper.  We emphasise that, as Example \ref{example2.12} demonstrates, the improvements are not limited to merely the localisation of the solution via the Luxemburg norm.  There is also a significant improvement in the restrictions on both $\lambda$ and $f$ -- cf., \eqref{eq2.34ggg}--\eqref{eq2.35ggg} versus \eqref{eq2.36ggg}--\eqref{eq2.37ggg} in the example.  We mentioned this earlier in Section 1.

\begin{example}\label{example2.12}
\noindent
We illustrate the effect of our framework by analysing a single problem from \cite[Example 2.13]{goodrich2025unified}, in which $b\equiv\bm{1}$.
Set
\[A(t):=\frac{1000}{3}t\sin{\left(\frac{\pi}{6}t\right)}\] and
\[p(t):=\frac{7}{2}+\frac{3}{2}\cos{t}.\]
With the Dirichlet Green’s function $G$ from \eqref{eq:G_dirichlet}, consider the nonlocal boundary value problem
\begin{equation}\label{eq2.35}
\begin{aligned}
- A\!\left(\int_0^1\big(u(s)\big)^{\frac{7}{2}+\frac{3}{2}\cos{s}}\,ds\right) u''(t) &= \lambda f\big(t,u(t)\big), \quad 0<t<1,\\
u(0)&=u(1)=0.
\end{aligned}
\end{equation}
For this exponent function $p$, we may take $p^-=2$ and $p^+=5$.  Choose $\displaystyle\alpha=\frac{1}{4}$ and $\displaystyle\beta=\frac{3}{4}$; then (see \cite{erbe_positive_2004})
\[
\eta_0=\dfrac{1}{4},\qquad
\max_{\tau\in[0,1]}\int_{\alpha}^{\beta} G(\tau,s)\,ds=\dfrac{3}{32},\qquad
\max_{\tau\in[0,1]}\int_{0}^{1} G(\tau,s)\,ds=\dfrac{1}{8}.
\]
We fix \(\rho_1=\dfrac{1}{2500}\) and \(\rho_2=3\).

Our goal is to compare the localisation provided by \cite{goodrich2025unified} to the localisation we have developed in this work.  In particular, we will show that the localisation induced by the Luxemburg norm is much tighter.  We will also show that the auxiliary conditions imposed on both $\lambda$ and $f$ are much weaker.

\subsection*{Original, sup-norm framework \cite[Example 2.13]{goodrich2025unified}}
Provided that $\lambda$ and $f$ satisfy (to three decimal places of accuracy) both
\begin{equation}\label{eq2.34ggg}
\lambda f^m_{\left[\frac{1}{4},\frac{3}{4}\right] \times \left[\frac{1}{200},\frac{102}{25}\sqrt{2}\right]}>\left(\frac{256}{375}\sqrt{\frac{379}{3}}\right)\sin{\frac{\pi}{15000}}\approx0.002
\end{equation}
and
\begin{equation}\label{eq2.35ggg}
\lambda f^M_{\left[0, 1\right]\times\left[0, 4\sqrt2(\sqrt3+1)\right]}<8000\sqrt[5]{3}\approx9965.848,
\end{equation}
then \cite[Corollary 2.10]{goodrich2025unified} implies that problem \eqref{eq2.35} has at least one positive solution, $u_0$, satisfying
\begin{equation}
u_0\in\widehat{V}_3\setminus\overline{\widehat{V}_{\frac{1}{2500}}}.\notag
\end{equation}
Moreover, we can also conclude from \cite[Corollary 2.10]{goodrich2025unified} that $u_0$ satisfies the localisation
\begin{equation}
0.02=\frac{1}{50}<\Vert u_0\Vert_{\infty}<4\sqrt{2}\big(\sqrt{3}+1\big)\approx15.455.\notag
\end{equation}

\subsection*{Hybrid cone framework (i.e., Theorem \ref{theorem2.25})}
Provided that $\lambda$ and $f$ satisfy (to three decimal places of accuracy) both
\begin{equation}\label{eq2.36ggg}
\lambda f^m_{\left[\frac{1}{4},\frac{3}{4}\right] \times \left[\frac{1}{200},4\sqrt{2}(1/2500)^{1/5}\approx 1.183\right]}>\frac{64}{15}\sin{\left(\frac{\pi}{15000}\left(\frac{1}{2500}\right)^{1/5}\right)}\approx1.869\times10^{-4}
\end{equation}
and
\begin{equation}\label{eq2.37ggg}
\lambda f^M_{[0,1]\times\left[0,\, 4\sqrt{6}\approx 9.798\right]}< \tfrac{1000\cdot 3^{1/5}}{\left\|\int_0^1 G(t,s)\,ds\right\|_{L^{p(\cdot)}}}\approx 1.21\times 10^4,
\end{equation}
then Theorem \ref{theorem2.25} implies that problem \eqref{eq2.35} has at least one positive solution, $u_0$, satisfying
\begin{equation}
u_0\in\widehat{V}_3\setminus\overline{\widehat{V}_{\frac{1}{2500}}}.\notag
\end{equation}
Moreover, we can also conclude from Theorem \ref{theorem2.25} that $u_0$ satisfies the localisation
\begin{equation}
0.02=\frac{1}{50}\le\Vert u_0\Vert_{L^{p(\cdot)}}\le\sqrt{3} \approx 1.732.\notag
\end{equation}

In light of the preceding, here is an enumeration of the improvements the methodology in the present work affords.  Note that as part of this discussion we reference Figures \ref{fig1}--\ref{fig2}.  We also use the terminology (\textquotedblleft inner radius\textquotedblright, \textquotedblleft inner height\textquotedblright, etc.) introduced in Section 1.
\begin{enumerate}[label=(\roman*),leftmargin=1.2cm]
\item \textbf{Inner radius threshold.}
The new threshold is about \(12\times\) better (approximately \(92\%\) reduction) -- cf., Figure \ref{fig2}.

\item \textbf{Inner rectangle height.}
The \(u\)-range shrinks from \(M^{*}_{\mathrm{sup},\rho_1}\approx 5.770\) to \(B^{*}_{\infty,\rho_1}\approx 1.183\), a reduction of about \(79.5\%\), which makes the condition involving $f^m$ easier to satisfy -- cf., Figure \ref{fig1}.

\item \textbf{Outer radius threshold.}
Comparing the right-hand sides of inequalities \eqref{eq2.35ggg} and \eqref{eq2.37ggg}, there is an increase of approximately $21\%$ -- cf., Figure \ref{fig2}.  Such an increase is favourable because it makes condition \eqref{eq2.37ggg} easier to satisfy relative to \eqref{eq2.35ggg}.

\item \textbf{Outer rectangle height.}
The \(u\)-range shrinks from \(M^{*}_{\mathrm{sup},\rho_2}\approx 15.455\) to \(B^{*}_{\infty,\rho_2}\approx 9.798\), a reduction of about \(36.6\%\), which makes the condition involving $f^M$ easier to satisfy -- cf., Figure \ref{fig1}.

\item \textbf{Direct localisation.}
The outer bound tightens from \(15.455\) to \(1.732\), an approximately $88.8\%$ reduction (about \(9\times\) tighter) -- cf., Figure \ref{fig1}.
\end{enumerate}
This demonstrates that the hybrid framework offers a substantial, order-of-magnitude improvement for this class of problem.  Moreover, it illustrates that the Luxemburg norm $\Vert\cdot\Vert_{L^{p(\cdot)}}$ provides both a more natural and a more \textquotedblleft accurate\textquotedblright\ way to localise the solution of \eqref{eq1.1}.
\end{example}

\begin{remark}\label{remark2.32}
There has been some work on investigating \emph{non}-existence of solution to nonlocal differential equations -- especially as related to the magnitude of the parameter $\lambda$.  Both the first author \cite{goodrich19,goodrich25,goodrich24,goodrich27} and Shibata \cite{shibata1,shibata2,shibata3} have conducted studies along these lines, albeit using very different methodologies.  It would, therefore, be interesting to investigate whether the new methodology we have introduced has any salutary effect on non-existence results.  
\end{remark}



\end{document}